\newcommand{\BB}{\mathbb{B}}
\newcommand{\CC}{\mathbb{C}}
\newcommand{\dd}{\,\mathrm{d}}
\newcommand{\ext}{\operatorname{ext}}
\newcommand{\NN}{\mathbb{N}}
\newcommand{\pd}{\partial}
\newcommand{\RR}{\mathbb{R}}
\renewcommand{\Re}{\operatorname{Re}}
\newcommand{\tr}{\operatorname{tr}}
\theoremstyle{plain}
\newtheorem{theorem}{Theorem}[section]
\theoremstyle{plain}
\newtheorem{lemma}{Lemma}[section]
\theoremstyle{remark}
\newtheorem{remark}{Remark}[section]
    \crefname{equation}{eq.}{eqs.}
    \Crefname{equation}{Eq.}{Eqs.}
\title{A characterization of the subspace of radially symmetric functions in Sobolev spaces}
\author{Matthias Ostermann}
\address{University of Vienna, Faculty of Mathematics, Oskar-Morgenstern-Platz 1, 1090 Vienna, Austria}
\email{matthias.ostermann@univie.ac.at}
\begin{document}

\begin{abstract}
In this paper, we show that any Sobolev norm of nonnegative integer order of radially symmetric functions is equivalent to a weighted Sobolev norm of their radial profile. This establishes in terms of weighted Sobolev spaces on an interval a complete characterization of radial Sobolev spaces, which was open until now. As an application, we give a description of Sobolev norms of corotational maps.
\end{abstract}

\maketitle

\section{Introduction}
For $p\geq 1$ and integers $k\geq 0$, the \emph{classical Sobolev norm} is defined by
\begin{equation*}
\| \varphi \|_{W^{k,p}(\BB^{d}_{r})} = \Big( \sum_{0\leq |\alpha| \leq k} \| \pd^\alpha \varphi \|_{L^{p}(\BB^{d}_{r})}^{p} \Big)^{\frac{1}{p}} \,, \qquad \varphi\in C^{\infty}(\overline{\BB^{d}_{r}}) \,,
\end{equation*}
on the space $C^{\infty}(\overline{\BB^{d}_{r}})$ of smooth functions on the open ball $\BB^{d}_{r} \subset \RR^{d}$ of radius $r>0$ centred around the origin, whose derivatives of all orders are continuous up to the boundary. The \emph{Sobolev space} $W^{k,p}(\BB^{d}_{r})$ is realized as the completion of the space $C^{\infty}(\overline{\BB^{d}_{r}})$ with respect to the norm $\| \,.\, \|_{W^{k,p}(\BB^{d}_{r})}$. We remark that the spaces obtained by completion agree with the usual Sobolev spaces defined in terms of weak derivatives, see \cite[p. 266, Theorem 3]{MR2597943}. On $\RR^{d}$, the Sobolev norm may be arranged in terms of \emph{classical homogeneous Sobolev norms}
\begin{equation*}
\| \varphi \|_{\dot{W}^{k,p}(\RR^{d})} = \Big( \sum_{|\alpha|=k} \| \pd^\alpha \varphi \|_{L^{p}(\RR^{d})}^{p} \Big)^{\frac{1}{p}} \,, \qquad \varphi\in C^{\infty}_{\mathrm{c}}(\RR^{d}) \,,
\end{equation*}
on the space $C^{\infty}_{\mathrm{c}}(\RR^{d})$ of smooth functions with compact support. The homogeneous Sobolev space $\dot{W}^{k,p}(\RR^{d})$ is the completion of the space $C^{\infty}_{\mathrm{c}}(\RR^{d})$ with respect to the norm $\| \,.\, \|_{\dot{W}^{k,p}(\RR^{d})}$. The underlying spatial domains are invariant under rotations and therefore, the radial subspaces
\begin{equation*}
W^{k,p}_{\mathrm{rad}}(\BB^{d}_{r}) \subset W^{k,p}(\BB^{d}_{r})
\qquad\text{and}\qquad
\dot{W}^{k,p}_{\mathrm{rad}}(\RR^{d}) \subset \dot{W}^{k,p}(\RR^{d}) \,,
\end{equation*}
which are the completion of the radial spaces $C^{\infty}_{\mathrm{rad}}(\overline{\BB^{d}_{r}})$ and $C^{\infty}_{\mathrm{rad}}(\RR^{d}) \cap C^{\infty}_{\mathrm{c}}(\RR^{d})$ endowed with the norms $\| \,.\, \|_{W^{k,p}(\BB^{d}_{r})}$ and $\| \,.\, \|_{\dot{W}^{k,p}(\RR^{d})}$, respectively, appear frequently. Here and in the following, we encounter the space of smooth radial functions
\begin{equation*}
C^{\infty}_{\mathrm{rad}}(\overline{\BB^{d}_{r}}) = \{ \varphi \in C^{\infty}(\overline{\BB^{d}_{r}}) \mid \varphi \text{ is radially symmetric} \}
\end{equation*}
and also the space of smooth even functions
\begin{equation*}
C^{\infty}_{\mathrm{ev}}([0,r]) = \{ f\in C^{\infty}([0,r]) \mid \forall\,n\in\NN_{0}: f^{(2n+1)}(0) = 0 \} \,.
\end{equation*}
We recall the following representations of smooth radial functions, for which we give a short proof in \Cref{RepresentativeEven,RepresentativeRadial}.
\begin{lemma}
\label{RadialRepresentationLemma}
Let $d\geq 2$ be an integer and $r>0$. Let $\varphi \in C^{\infty}_{\mathrm{rad}}(\overline{\BB^{d}_{r}})$. Then, there are $f_{\varphi} \in C^{\infty}_{\mathrm{ev}}([0,r])$ and $\widetilde{f}_{\varphi} \in C^{\infty}([0,r^{2}])$ such that
\begin{equation}
\label{RadialRepresentation}
\varphi(x) = f_{\varphi}(|x|)
\qquad \text{and} \qquad
\varphi(x) = \widetilde{f}_{\varphi}(|x|^{2}) \,.
\end{equation}
\end{lemma}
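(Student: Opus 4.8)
The plan is to reduce the statement to a one-dimensional fact about smooth even functions and then to invoke a (self-contained version of a) theorem of Whitney on even functions.

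\textbf{Step 1: restriction to a ray.} Fix a unit vector $\omega\in\RR^{d}$ and set $g(t):=\varphi(t\omega)$ for $t\in[-r,r]$. Since $t\omega\in\overline{\BB^{d}_{r}}$ whenever $|t|\le r$ and $\varphi\in C^{\infty}(\overline{\BB^{d}_{r}})$, we have $g\in C^{\infty}([-r,r])$. Because $d\ge2$, the rotation group acts transitively on each sphere $\{|x|=\rho\}$, so radial symmetry of $\varphi$ gives $\varphi(x)=\varphi(|x|\omega)=g(|x|)$ for every $x\in\overline{\BB^{d}_{r}}$; in particular $g(-t)=\varphi(-t\omega)=\varphi(t\omega)=g(t)$, so $g$ is even and all its odd-order derivatives vanish at $0$. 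Setting $f_{\varphi}:=g|_{[0,r]}$ we obtain $f_{\varphi}\in C^{\infty}_{\mathrm{ev}}([0,r])$ with $\varphi(x)=f_{\varphi}(|x|)$, which is the first identity in \Cref{RadialRepresentation}.

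\textbf{Step 2: extracting the square.} The natural candidate for the second representation is $\widetilde f_{\varphi}(u):=g(\sqrt{u})=f_{\varphi}(\sqrt{u})$ for $u\in[0,r^{2}]$, since then $\varphi(x)=f_{\varphi}(|x|)=\widetilde f_{\varphi}(|x|^{2})$ automatically, and for $u>0$ the function $\widetilde f_{\varphi}$ is smooth as a composition of smooth maps; thus everything reduces to smoothness of $\widetilde f_{\varphi}$ at $u=0$. To this end, fix $N\in\NN_{0}$ and apply Taylor's theorem with integral remainder to $g$ at $0$; since the odd-order derivatives of $g$ vanish there, this gives
\begin{equation*}
g(t)=\sum_{j=0}^{N}\frac{g^{(2j)}(0)}{(2j)!}\,t^{2j}+t^{2N+2}R_{N}(t),\qquad R_{N}(t)=\frac{1}{(2N+1)!}\int_{0}^{1}(1-s)^{2N+1}g^{(2N+2)}(st)\dd s,
\end{equation*}
where $R_{N}\in C^{\infty}([-r,r])$, and $R_{N}$ is even because $g^{(2N+2)}$ is. Substituting $t=\sqrt{u}$ yields $\widetilde f_{\varphi}(u)=\sum_{j=0}^{N}\frac{g^{(2j)}(0)}{(2j)!}\,u^{j}+u^{N+1}R_{N}(\sqrt{u})$; the polynomial part is smooth, so it suffices to show that $h_{N}(u):=u^{N+1}R_{N}(\sqrt{u})$ is of class $C^{N}$ near $u=0$. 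As $N$ is arbitrary and the polynomial truncations are consistent, this will prove $\widetilde f_{\varphi}\in C^{\infty}([0,r^{2}])$.

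\textbf{Main obstacle.} The estimate on $h_{N}$ is the technical core of the argument, and I expect it to be the only real obstacle. Writing $h_{N}(u)=t^{2N+2}R_{N}(t)$ with $t=\sqrt{u}$ and using $\frac{\dd}{\dd u}=\frac{1}{2t}\frac{\dd}{\dd t}$, an induction on $k$ shows that for $0\le k\le N+1$ the derivative $\frac{\dd^{k}}{\dd u^{k}}h_{N}(u)$ is, for $u>0$, a finite sum of terms of the form $u^{m}Q(\sqrt{u})$ with an integer $m\ge N+1-k$ and some even $Q\in C^{\infty}([-r,r])$. The point that keeps the exponents under control is that differentiating an even (respectively odd) smooth function in $t$ produces an odd (respectively even) one, so an odd-order $t$-derivative of $R_{N}$ always carries a compensating factor of $t$, which absorbs the $\frac{1}{2t}$ coming from the chain rule. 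For $k\le N$ every such term is $O(u)$, hence extends continuously by $0$, and for $k=N+1$ the leading term is a constant multiple of $R_{N}(\sqrt{u})$, which extends continuously by $R_{N}(0)$; by the standard criterion for one-sided differentiability this gives $h_{N}\in C^{N}([0,r^{2}])$, and the proof is complete.
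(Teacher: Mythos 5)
Your argument is correct, and its skeleton is the same as the paper's: restrict $\varphi$ to a ray to obtain the smooth even profile $f_{\varphi}$ (this is \Cref{RepresentativeRadial}, where the paper simply takes $\omega=e_{d}$), and then reduce everything to Whitney's one-dimensional fact that $f(\sqrt{\,\cdot\,})$ is smooth up to the origin when $f$ is smooth and even (\Cref{RepresentativeEven}). The only genuine difference lies in how that last analytic step is proved. The paper guesses and verifies by induction (differentiate, integrate by parts) a closed-form integral representation
\begin{equation*}
\widetilde{f}^{(n)}(\rho^{2})=\frac{1}{2^{2n-1}(n-1)!}\int_{0}^{1}(1-t^{2})^{n-1}f^{(2n)}(t\rho)\dd t \,,
\end{equation*}
from which the existence of $\lim_{\rho\to0}\widetilde{f}^{(n)}(\rho)$ is immediate for every $n$ at once. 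You instead expand $g$ by Taylor's theorem with integral remainder, peel off the polynomial part, and control $u^{N+1}R_{N}(\sqrt{u})$ by an induction in which the parity bookkeeping (an odd smooth function is $t$ times an even smooth one, by Hadamard's lemma, which absorbs the $\tfrac{1}{2\sqrt{u}}$ from the chain rule) keeps the exponents nonnegative; finiteness of the gain ($C^{N}$ for each $N$) is then compensated by letting $N$ be arbitrary. Your route is more robust and requires no clever ansatz, at the cost of a longer bookkeeping argument and of yielding only qualitative smoothness; the paper's explicit formula is shorter and, as a bonus, gives quantitative control of $\widetilde{f}^{(n)}$ in terms of $f^{(2n)}$, though that extra information is not used elsewhere in the paper.
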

This raises the natural question of how Sobolev norms of radial functions are related to Sobolev norms of their radial profiles. In case $k=0$, the Lebesgue norm of a radial function is expressed as a weighted Lebesgue norm of its radial representatives \eqref{RadialRepresentation}, namely
\begin{equation}
\label{radialLebesgue}
\| \varphi \|_{L^{p}(\BB^{d}_{r})}^{p} = |S^{d-1}| \Big\| (\,.\,)^{\frac{d-1}{p}} f_{\varphi} \Big\|_{L^{p}(0,r)}^{p} = \frac{|S^{d-1}|}{2} \Big\| (\,.\,)^{\frac{d-2}{2p}} \widetilde{f}_{\varphi} \Big\|_{L^{p}(0,r^{2})}^{p} \,,
\end{equation}
where $|S^{d-1}|$ is the surface area of the $(d-1)$-dimensional unit sphere in $\RR^{d}$. Analogues of these formulas for $W^{1,p}$-norms are derived in the same way. In this context, the pioneering radial lemmas are the famous inequality due to W. Strauss \cite{MR454365} and extensions thereof by W. M. Ni \cite{MR674869} and H. Berestycki and P.-L. Lions \cite{LIONS1982315}, \cite{MR695535}. Strauss-type inequalities for some higher-order homogeneous Sobolev spaces and also nonradial versions were obtained by Y. Cho and T. Ozawa in \cite{MR2538202}. This interplay of decay and regularity has been further investigated by W. Sickel, L. Skrzypczak and J. Vybiral \cite{MR1790248}, \cite{MR2902295}, \cite{MR2921084}, \cite{MR3330617} for the more general classes of Besov spaces and Lizorkin-Triebel spaces. Among them, they also studied radial subspaces of Sobolev spaces. Their results yield a linear isomorphism for radial Sobolev spaces $W^{2m,p}_{\mathrm{rad}}(\RR^{d})$ of even order $k=2m$ for $d\geq 2$ and $p>1$ into the closure of $C^{\infty}_{\mathrm{c}}(\RR) \cap C^{\infty}_{\mathrm{ev}}(\RR)$ with respect to a weighted norm expressed in terms of the $m$-th power of the radial Laplace operator, which is given by
\begin{equation*}
(\Delta_{\mathrm{r}} f_{\varphi})(\rho) := \rho^{-(d-1)} \pd_{\rho} \big( \rho^{d-1} \pd_{\rho} f_{\varphi}(\rho) \big) \,,
\end{equation*}
see \cite[Theorem 7]{MR2902295}. However, they noticed in \cite[Remark 6]{MR2902295} that a complete characterization for radial Sobolev spaces of higher order is still open and expect that a more elementary description can be given. So far, the most comprehensive results to our knowledge are the radial lemmas in \cite{MR2838041}, where D. G. de Figueiredo, E. M. dos Santos and O. H. Miyagaki used that for all $j\in\NN$ and $\varphi(x) = f_{\varphi}(|x|)$
\begin{equation}
\label{RadialRepresentationDerivative}
f_{\varphi}^{(j)}(|x|) = \sum_{|\alpha|=j} (\pd^{\alpha} \varphi)(x) \frac{j!}{\alpha!} \frac{x^{\alpha}}{|x|^{j}}
\end{equation}
to prove an embedding of $W^{k,p}_{\mathrm{rad}}(\BB^{d}_{1})$ into a weighted Sobolev space on the unit interval, which is an isomorphism precisely if $0\leq k < d/2 + 1$. A similar characterization based on identity \eqref{RadialRepresentationDerivative} was given by I. Glogi\'{c} in \cite{MR4469070} in case $p=2$, who recently proved in \cite{2022arXiv220706952G} a Strauss-type inequality for partial derivatives of radial functions. Alternatively, general $L^{2}$-based Sobolev norms on $\RR^{d}$ can be expressed equivalently with the Fourier transform, see \cite[p. 297, Theorem 8]{MR2597943}. If additionally the space dimension is odd, this can be used along with properties of the radial Fourier transform and Bessel functions to characterize radial Sobolev norms as a one-dimensional Sobolev norm of a weighted radial representative, see \cite{MR4338226}, \cite{MR4661000}.
\par\medskip
In this paper, we provide in \Cref{THMnorm,THMspace} the missing characterization of radial Sobolev spaces. We also apply our results in \Cref{THMcorot} to describe Sobolev norms of corotational maps.
\subsection{Characterization of radial Sobolev norms}
It is not obvious how the formulas in \eqref{radialLebesgue} extend to higher radial Sobolev norms. Simply differentiating \eqref{RadialRepresentation} and expressing partial derivatives of a radial function in terms of derivatives of its radial representative produces via the chain rule terms with singularities at the origin. To overcome this spurious problem, we study in \Cref{DerRad} systematically derivatives of radially symmetric functions in terms of the operation
\begin{equation*}
(D f)(\rho) := \rho^{-1} f'(\rho) \,, \quad \rho > 0 \,,
\end{equation*}
for $f\in C^{\infty}_{\mathrm{ev}}([0,r])$. The operation $(Df)(\rho)$ can be smoothly extended to all $\rho\in[0,r]$ to give a derivation
\begin{equation*}
D: C^{\infty}_{\mathrm{ev}}([0,r]) \rightarrow C^{\infty}_{\mathrm{ev}}([0,r]) \,, \quad f \mapsto \Big( \rho \mapsto (Df)(\rho) \Big) \,.
\end{equation*}
With this, we come to the main results of this paper.
\begin{theorem}
\label{THMnorm}
Let $d\geq 2$, $k\geq 0$ be integers and $p\geq 1$, $r>0$. For any $\varphi \in C^{\infty}_{\mathrm{rad}}(\overline{\BB^{d}_{r}})$, let $f_{\varphi} \in C^{\infty}_{\mathrm{ev}}([0,r])$ and $\widetilde{f}_{\varphi} \in C^{\infty}([0,r^{2}])$ be the radial profiles of $\varphi$ such that
\begin{equation*}
\varphi(x) = f_{\varphi}(|x|) = \widetilde{f}_{\varphi}(|x|^{2}) \,,
\end{equation*}
see \Cref{RadialRepresentationLemma}.
\begin{enumerate}[itemsep=1em,topsep=1em]
\item Then, the equivalence of norms
\begin{equation*}
\| \varphi \|_{W^{k,p}(\BB^{d}_{r})} \simeq \sum_{j=0}^k \Big\| (\,.\,)^{\frac{d-1}{p}+j} D^{j} f_{\varphi} \Big\|_{L^{p}(0,r)} \simeq \sum_{j=0}^k \Big\| (\,.\,)^{\frac{d-2}{2p}+\frac{j}{2}} \widetilde{f}_{\varphi}^{(j)} \Big\|_{L^{p}(0,r^{2})}
\end{equation*}
holds for all $\varphi \in C^{\infty}_{\mathrm{rad}}(\overline{\BB^{d}_{r}})$.
\medskip
\item Similarly, the equivalence of homogeneous Sobolev norms
\begin{equation*}
\| \varphi \|_{\dot{W}^{k,p}(\RR^{d})} \simeq \Big\| (\,.\,)^{\frac{d-1}{p} + k} D^k f_{\varphi} \Big\|_{L^{p}(0,\infty)} \simeq \Big\| (\,.\,)^{\frac{d-2}{2p}+\frac{k}{2}} \widetilde{f}_{\varphi}^{(k)} \Big\|_{L^{p}(0,\infty)}
\end{equation*}
holds for all $\varphi\in C^{\infty}_{\mathrm{rad}}(\RR^{d}) \cap C^{\infty}_{\mathrm{c}}(\RR^{d})$.
\end{enumerate}
\end{theorem}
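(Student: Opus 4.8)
The plan is to transfer everything to the interval by integrating in polar coordinates, substituting the expansions of $\partial^{\alpha}\varphi$ in terms of the iterated operation $D$ developed in \Cref{DerRad}, and then collapsing the resulting weighted sums by one–dimensional Hardy and trace inequalities; since the Sobolev spaces are defined by completion, it is enough to treat the smooth functions in the statement. The second $\simeq$ in each item will be essentially a change of variables: from $f_{\varphi}(\rho)=\widetilde{f}_{\varphi}(\rho^{2})$ one obtains inductively $(D^{j}f_{\varphi})(\rho)=2^{j}\widetilde{f}_{\varphi}^{(j)}(\rho^{2})$, and the substitution $s=\rho^{2}$ turns $\|(\,.\,)^{\frac{d-1}{p}+j}D^{j}f_{\varphi}\|_{L^{p}(0,r)}$ into a fixed multiple of $\|(\,.\,)^{\frac{d-2}{2p}+\frac{j}{2}}\widetilde{f}_{\varphi}^{(j)}\|_{L^{p}(0,r^{2})}$ (and the same on $(0,\infty)$). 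So it remains to compare $\|\varphi\|_{W^{k,p}}$, resp.\ $\|\varphi\|_{\dot{W}^{k,p}}$, with the norms built from $f_{\varphi}$ and $D$.

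The core input is the pointwise description of derivatives of radial functions from \Cref{DerRad}: for $|\alpha|=m$ one has $\partial^{\alpha}\varphi(x)=\sum_{j'=\lceil m/2\rceil}^{m}P_{\alpha,j'}(x)\,(D^{j'}f_{\varphi})(|x|)$ with $P_{\alpha,j'}$ a homogeneous polynomial of degree $2j'-m$, and, crucially, these expansions are non–degenerate in the sense that
\[
\sum_{|\alpha|=m}|\partial^{\alpha}\varphi(x)|^{p}\;\simeq\;\sum_{j'=\lceil m/2\rceil}^{m}|x|^{(2j'-m)p}\,\bigl|(D^{j'}f_{\varphi})(|x|)\bigr|^{p},\qquad x\neq0,
\]
with constants depending only on $d,m,p$. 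The upper bound here is immediate from the expansion together with $|P_{\alpha,j'}(x)|\lesssim|x|^{2j'-m}$; the lower bound is the assertion that the linear map sending $(v_{j'})_{j'}$ to $\bigl(\sum_{j'}|x|^{-(2j'-m)}P_{\alpha,j'}(x)\,v_{j'}\bigr)_{|\alpha|=m}$ is injective for each direction $x/|x|$, hence uniformly so by compactness of $S^{d-1}$. Integrating this equivalence over spheres and then in the radial variable (cf.\ \eqref{radialLebesgue} with $k=0$, applied to each summand) yields
\[
\|\varphi\|_{W^{k,p}(\BB^{d}_{r})}^{p}\;\simeq\;\sum_{m=0}^{k}\ \sum_{j'=\lceil m/2\rceil}^{m}\Bigl\|(\,.\,)^{\frac{d-1}{p}+2j'-m}D^{j'}f_{\varphi}\Bigr\|_{L^{p}(0,r)}^{p},
\]
and, keeping only $m=k$, the analogue for $\|\varphi\|_{\dot{W}^{k,p}(\RR^{d})}^{p}$ with $L^{p}(0,\infty)$ in place of $L^{p}(0,r)$.

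It then remains to collapse these sums to the ones in the statement. One direction is free, because the summands with $j'=m$ (resp.\ $j'=k$) already are the desired terms. For the other I would use the exponent bounds $0\le 2j'-m\le j'\le k$: writing $a:=2j'-m$, the identity $(D^{j'}f_{\varphi})'(\rho)=\rho\,(D^{j'+1}f_{\varphi})(\rho)$ gives $|(D^{j'}f_{\varphi})(\rho)|\le|(D^{j'}f_{\varphi})(r)|+\int_{\rho}^{r}s\,|(D^{j'+1}f_{\varphi})(s)|\dd s$, and the Hardy inequality $\int_{0}^{r}\rho^{\gamma}\bigl(\int_{\rho}^{r}h\bigr)^{p}\dd\rho\le\bigl(\tfrac{p}{\gamma+1}\bigr)^{p}\int_{0}^{r}\rho^{\gamma+p}h^{p}\dd\rho$ — applicable since $\gamma=d-1+ap\ge d-1>-1$ at every stage — passes from the pair $(j',a)$ to $(j'+1,a+2)$ up to a boundary term. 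After $m-j'$ such steps one reaches $(m,m)$, i.e.\ the admissible term $\|(\,.\,)^{\frac{d-1}{p}+m}D^{m}f_{\varphi}\|_{L^{p}(0,r)}$, and each boundary value $|(D^{l}f_{\varphi})(r)|$ with $l\le k-1$ is bounded by $\|(\,.\,)^{\frac{d-1}{p}+l}D^{l}f_{\varphi}\|_{L^{p}(0,r)}+\|(\,.\,)^{\frac{d-1}{p}+l+1}D^{l+1}f_{\varphi}\|_{L^{p}(0,r)}$ via the embedding $W^{1,p}(r/2,r)\hookrightarrow C([r/2,r])$ for $D^{l}f_{\varphi}$ (on $[r/2,r]$ all the weights are comparable and $(D^{l}f_{\varphi})'=(\,.\,)\,D^{l+1}f_{\varphi}$). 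In the homogeneous case the $D^{l}f_{\varphi}$ have compact support, so on $(0,\infty)$ the same computation runs with no boundary terms and lands directly on $\|(\,.\,)^{\frac{d-1}{p}+k}D^{k}f_{\varphi}\|_{L^{p}(0,\infty)}$. Summing over $m\le k$ (resp.\ using only $m=k$) then gives both items.

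The only point requiring real work is the lower bound in the displayed pointwise equivalence, i.e.\ excluding order–by–order cancellations among the terms $D^{j'}f_{\varphi}$ in $\sum_{|\alpha|=m}|\partial^{\alpha}\varphi|^{p}$; this non–degeneracy of the derivative expansions is exactly what the preparatory \Cref{DerRad} is meant to establish, while everything else reduces to polar coordinates and one–variable Hardy and trace estimates.
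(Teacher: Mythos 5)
Your proposal is correct and follows essentially the same route as the paper: expand $\pd^{\alpha}\varphi$ via \Cref{DerRad}, integrate in polar coordinates, collapse the off-diagonal weighted terms $\|(\,.\,)^{\frac{d-1}{p}+2j'-m}D^{j'}f_{\varphi}\|_{L^{p}}$ by iterated one-dimensional Hardy inequalities together with a trace bound for the boundary values at $r$, and pass to $\widetilde{f}_{\varphi}$ by the substitution $s=\rho^{2}$ using $(D^{j}f_{\varphi})(\rho)=2^{j}\widetilde{f}_{\varphi}^{(j)}(\rho^{2})$. The only presentational difference is that you package the two directions as a single pointwise equivalence whose lower bound you assert for every $j'$; since you only actually need that lower bound for $j'=m$, which is precisely identity \eqref{Dnf} of \Cref{DerRad}, nothing is missing.
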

To describe the radial Sobolev spaces, let us consider the weighted norms and associated spaces as they appear in \Cref{THMnorm}. For $d \geq 2$, $k \geq 0$  integers and $p\geq 1$, $r>0$, let $w = \big( w_{j} \big)_{j=0}^k$ be the collection of weight functions that are given by $w_{j}(\rho) = \rho^{\frac{d-2+pj}{2}}$ and define a \emph{weighted Sobolev norm} by
\begin{equation}
\label{WeightedSobolevNorm}
\| f \|_{W^{k,p}\big((0,r^{2}),w\big)} = \Big( \sum_{j=0}^k \Big\| w_{j}^{\frac{1}{p}} f^{(j)} \Big\|_{L^{p}(0,r^{2})}^{p} \Big)^{\frac{1}{p}} \,, \qquad f\in C^{\infty}([0,r^{2}]) \,.
\end{equation}
The \emph{weighted Sobolev space} $W^{k,p}\big((0,r^{2}),w\big)$ is the completion of $C^{\infty}([0,r^{2}])$ with respect to this norm. Homogeneous versions $\dot{W}^{k,p}\big((0,\infty),(\,.\,)^{\frac{d-2+pk}{2}}\big)$ are defined analogously. Associated to the representation \eqref{RadialRepresentation}, we consider now the densely defined linear \emph{trace operator}
\begin{equation*}
\tr: C^{\infty}_{\mathrm{rad}}(\overline{\BB^{d}_{r}}) \subset W^{k,p}_{\mathrm{rad}}(\BB^{d}_{r}) \rightarrow W^{k,p}\big( (0,r^{2}),w \big) \,, \qquad \varphi \mapsto \widetilde{f}_{\varphi} \,,
\end{equation*}
and \emph{extension operator}
\begin{equation*}
\ext: C^{\infty}([0,r^{2}]) \subset
W^{k,p}\big( (0,r^{2}),w \big) \rightarrow W^{k,p}_{\mathrm{rad}}(\BB^{d}_{r}) \,, \qquad f \mapsto f(|\,.\,|^{2}) \,.
\end{equation*}
Their bounded linear extensions yield the desired characterization of the radial subspaces.
\begin{theorem}
\label{THMspace}
Let $d\geq 2$, $k\geq 0$ be integers and $p\geq 1$, $r>0$. The operator $\tr: W^{k,p}_{\mathrm{rad}}(\BB^{d}_{r}) \rightarrow W^{k,p}\big( (0,r^{2}),w \big)$ is a bounded linear isomorphism with bounded linear inverse operator $\ext: W^{k,p}\big( (0,r^{2}),w \big) \rightarrow W^{k,p}_{\mathrm{rad}}(\BB^{d}_{r})$. In the same way, $\dot{W}^{k,p}_{\mathrm{rad}}(\RR^{d})$ and $ \dot{W}^{k,p}\big( (0,\infty), (\,.\,)^{\frac{d-2+pk}{2}} \big)$ are isomorphic as normed linear spaces.
\end{theorem}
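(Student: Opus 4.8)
The plan is to obtain \Cref{THMspace} as a soft corollary of \Cref{THMnorm}: once the two-sided estimate of \Cref{THMnorm} is in hand, only a routine density argument remains, since all the analytic work has already been carried out there. First I would record that $\tr$ is single-valued and linear on its dense domain $C^{\infty}_{\mathrm{rad}}(\overline{\BB^{d}_{r}})$: evaluating $\varphi(x) = \widetilde{f}_{\varphi}(|x|^{2})$ at any $x$ with $|x| = \sqrt{s}$ fixes $\widetilde{f}_{\varphi}(s)$ for each $s\in[0,r^{2}]$, so $\widetilde{f}_{\varphi}$ is uniquely determined by, and linear in, $\varphi$, and by \Cref{RadialRepresentationLemma} it lies in $C^{\infty}([0,r^{2}])$, the dense domain of $\ext$. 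Conversely $\ext(f) = f(|\,.\,|^{2}) \in C^{\infty}_{\mathrm{rad}}(\overline{\BB^{d}_{r}})$ for $f\in C^{\infty}([0,r^{2}])$, and the uniqueness just noted gives $\tr\circ\ext = \mathrm{id}$ on $C^{\infty}([0,r^{2}])$ and $\ext\circ\tr = \mathrm{id}$ on $C^{\infty}_{\mathrm{rad}}(\overline{\BB^{d}_{r}})$. Thus $\tr$ and $\ext$ restrict to mutually inverse linear bijections between these two dense subspaces.

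Next I would observe that, since $w_{j}(\rho)^{1/p} = \rho^{\frac{d-2}{2p}+\frac{j}{2}}$, the defining formula \eqref{WeightedSobolevNorm} together with the equivalence of the $\ell^{1}$- and $\ell^{p}$-norms on $\RR^{k+1}$ makes the rightmost expression in \Cref{THMnorm}(1) comparable to $\| \widetilde{f}_{\varphi} \|_{W^{k,p}((0,r^{2}),w)} = \| \tr\varphi \|_{W^{k,p}((0,r^{2}),w)}$. Hence \Cref{THMnorm}(1) yields constants $0 < c \le C$, depending only on $d,k,p$, with $c\,\| \varphi \|_{W^{k,p}(\BB^{d}_{r})} \le \| \tr\varphi \|_{W^{k,p}((0,r^{2}),w)} \le C\,\| \varphi \|_{W^{k,p}(\BB^{d}_{r})}$ for every $\varphi\in C^{\infty}_{\mathrm{rad}}(\overline{\BB^{d}_{r}})$; taking $\varphi = \ext(f)$ in the left-hand inequality further gives $\| \ext(f) \|_{W^{k,p}(\BB^{d}_{r})} \le c^{-1}\| f \|_{W^{k,p}((0,r^{2}),w)}$ for every $f\in C^{\infty}([0,r^{2}])$. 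By the upper bound, $\tr$ extends uniquely to a bounded linear operator $W^{k,p}_{\mathrm{rad}}(\BB^{d}_{r}) \to W^{k,p}((0,r^{2}),w)$; by the lower bound this extension is bounded below, hence injective with closed range; and its range contains $\tr\big(\ext(C^{\infty}([0,r^{2}]))\big) = C^{\infty}([0,r^{2}])$, which is dense in $W^{k,p}((0,r^{2}),w)$ by construction of the latter. A closed, dense subspace being the whole space, $\tr$ is a bounded linear isomorphism. In the same way $\ext$ extends to a bounded linear operator in the reverse direction, and the operator identities $\tr\circ\ext = \mathrm{id}$ and $\ext\circ\tr = \mathrm{id}$, valid on the dense subspaces by the previous paragraph, persist on the completions by continuity; hence $\ext$ is the bounded linear inverse of $\tr$.

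For the homogeneous statement I would repeat the argument verbatim with \Cref{THMnorm}(2) in place of \Cref{THMnorm}(1): the relevant dense subspaces are now $C^{\infty}_{\mathrm{rad}}(\RR^{d})\cap C^{\infty}_{\mathrm{c}}(\RR^{d})$ and the space $C^{\infty}_{\mathrm{c}}([0,\infty))$ of smooth, compactly supported functions on $[0,\infty)$, interchanged bijectively by $\tr$ and $\ext$ (again $\widetilde{f}_{\varphi}$ is single-valued and compactly supported), and \Cref{THMnorm}(2) supplies the two-sided estimate against $\| \widetilde{f}_{\varphi} \|_{\dot{W}^{k,p}((0,\infty),(\,.\,)^{(d-2+pk)/2})}$. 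I do not anticipate a genuine obstacle anywhere: the only point that needs a moment's care is the density of the range of $\tr$, and that is immediate from $\tr\circ\ext = \mathrm{id}$ on the dense domain of $\ext$. In short, \Cref{THMspace} is a formal consequence of \Cref{THMnorm} and \Cref{RadialRepresentationLemma}, in which the real content resides.
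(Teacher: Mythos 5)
Your proposal is correct and follows essentially the same route as the paper: the isomorphism is deduced as a formal consequence of \autoref{THMnorm} together with the identities $\tr\circ\ext=\mathrm{id}$ and $\ext\circ\tr=\mathrm{id}$ on the dense smooth subspaces (via \autoref{RepresentativeEven} and \autoref{RepresentativeRadial}), followed by passage to the bounded linear extensions. You merely spell out the standard functional-analytic details (lower bound, closed and dense range, persistence of the inverse identities under continuous extension) that the paper's proof leaves implicit.
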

\subsection{Application to Sobolev norms of corotational maps}
We say that a map $F: \BB^{d}_{r} \rightarrow\mathbb{C}^{d}$, $F(x) = \big( F_{1}(x), \ldots, F_{d}(x) \big)$, is \emph{corotational} if there exists a function $f:[0,r)\rightarrow\mathbb{C}$ such that
\begin{equation*}
F_{i}(x) = x_{i} f(|x|)
\end{equation*}
for $i = 1,\ldots,d$, and call $f$ the radial profile of $F$. Sobolev norms of $L^{2}$-based Sobolev spaces $H^{k}(\BB^{d}_{r}) := W^{k,2}(\BB^{d}_{r})$ are defined for a corotational map $F \in C^{\infty}(\overline{\BB^{d}_{r}}, \CC^{d})$ as
\begin{equation*}
\| F \|_{H^{k}(\BB^{d}_{r})} = \Big( \sum_{i=1}^{d} \| F_{i} \|_{H^{k}(\BB^{d}_{r})}^{2} \Big)^{\frac{1}{2}} \,.
\end{equation*}
Corotational maps are a special case of so-called equivariant maps between rotationally symmetric manifolds, see e.g., \cite[p. 109, Definition 8.1]{MR1674843} for a general definition. As such, they appear as critical points to certain geometric action functionals, most prominently in the wave maps equation and in Yang-Mills equations. Equivalence results for Sobolev norms of corotational maps then allow to link the function spaces for the reduced equations of the radial profile to the spaces that underlie the general equations. Radial lemmas that characterize the Sobolev norm of $F$ in terms of its radial profile $f$ were formulated in \cite{MR1278351} on $\RR^{d}$ when $k = \frac{d}{2}$ in the context of wave maps and lately in \cite{MR4469070} with the restriction $0\leq k < \frac{d}{2} + 2$ in the context of Yang-Mills equations. However, it is expected in \cite[Remark A.4]{MR4469070} that an equivalence result also holds for spaces of higher order. Since a complete charactersiation of Sobolev norms for corotational maps is of current interest in the study of stability of blowup in nonlinear wave equations, we provide the full result in the following theorem.
\begin{theorem}
\label{THMcorot}
Let $d\geq 2$, $k\geq 0$ be integers and $p\geq 1$, $r>0$. We have
\begin{equation*}
\|F\|_{H^{k}(\BB^{d}_{r})} \simeq \| f(|\,.\,|) \|_{H^{k}(\BB^{d+2}_{r})}
\end{equation*}
for all corotational maps $F \in C^{\infty} ( \overline{\BB^{d}_{r}},\mathbb{C}^{d} )$ with radial profile $f\in C^{\infty}_{\mathrm{ev}} ( [0,r] )$.
\end{theorem}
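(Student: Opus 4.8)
The plan is to reduce the assertion to \Cref{THMnorm} by writing each component of a corotational map as a single partial derivative of an auxiliary radial function, and then transferring the resulting \emph{homogeneous} part of the Sobolev norm back through \Cref{THMnorm} in dimension $d+2$.

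First I would set $g(\rho):=\int_0^\rho s\,f(s)\dd s$ and $G(x):=g(|x|)$. Since $\rho\mapsto\rho f(\rho)$ is smooth and odd, $g\in C^\infty_{\mathrm{ev}}([0,r])$, hence $G\in C^\infty_{\mathrm{rad}}(\overline{\BB^d_r})$ with even radial profile $f_G=g$; moreover $\pd_i(g(|x|))=g'(|x|)\,x_i/|x|=x_i f(|x|)=F_i(x)$ on all of $\BB^d_r$, so $F_i=\pd_i G$, and $Dg=f$, whence $D^jg=D^{j-1}f$ for $j\geq1$. A short combinatorial computation then gives
\[
\|F\|_{H^k(\BB^d_r)}^2=\sum_{i=1}^d\ \sum_{0\leq|\alpha|\leq k}\|\pd^{\alpha+e_i}G\|_{L^2(\BB^d_r)}^2=\sum_{1\leq|\beta|\leq k+1}c_\beta\,\|\pd^\beta G\|_{L^2(\BB^d_r)}^2
\]
with $c_\beta=\#\{i:\beta_i\geq1\}\in\{1,\dots,d\}$; hence $\|F\|_{H^k(\BB^d_r)}^2\simeq\sum_{1\leq|\beta|\leq k+1}\|\pd^\beta G\|_{L^2(\BB^d_r)}^2=\|G\|_{H^{k+1}(\BB^d_r)}^2-\|G\|_{L^2(\BB^d_r)}^2$.

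The crucial step is the homogeneous counterpart of \Cref{THMnorm},
\[
\sum_{1\leq|\alpha|\leq n}\|\pd^\alpha\psi\|_{L^2(\BB^d_r)}^2\ \simeq\ \sum_{j=1}^n\Big\|(\,.\,)^{\frac{d-1}{2}+j}D^j f_\psi\Big\|_{L^2(0,r)}^2\qquad(\psi\in C^\infty_{\mathrm{rad}}(\overline{\BB^d_r})),
\]
which comes out of the \emph{proof} of \Cref{THMnorm} rather than its statement: expanding $\pd^\alpha\psi$ through the representation of partial derivatives of radial functions in terms of $(D^j f_\psi)(|\,.\,|)$, one sees that each homogeneous seminorm $\sum_{|\alpha|=m}\|\pd^\alpha\psi\|_{L^2}^2$ is controlled by the weighted $L^2$-norms of $D^j f_\psi$ with $\lceil m/2\rceil\leq j\leq m$ (so only $j\geq1$ enter), up to boundary terms at $\rho=r$ that one-dimensional Sobolev embedding on $[r/2,r]$ — where the weights are nondegenerate — absorbs; conversely $\|(\,.\,)^{\frac{d-1}{2}+m}D^m f_\psi\|_{L^2(0,r)}^2$ is recovered from the pairwise-distinct mixed partials $\pd_{i_1}\cdots\pd_{i_m}\psi=x_{i_1}\cdots x_{i_m}(D^m f_\psi)(|\,.\,|)$ when $d\geq m$, and by induction on $m$ otherwise, the off-diagonal contributions cancelling after integration by parts. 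Applying this with $n=k+1$, $\psi=G$, using $D^jg=D^{j-1}f$ together with the identity $\tfrac{d-1}{2}+j=\tfrac{(d+2)-1}{2}+(j-1)$ relating the dimension-$d$ and dimension-$(d+2)$ weights, gives
\[
\|F\|_{H^k(\BB^d_r)}^2\ \simeq\ \sum_{j=1}^{k+1}\Big\|(\,.\,)^{\frac{d-1}{2}+j}D^j g\Big\|_{L^2(0,r)}^2\ =\ \sum_{l=0}^{k}\Big\|(\,.\,)^{\frac{(d+2)-1}{2}+l}D^l f\Big\|_{L^2(0,r)}^2.
\]
Because $f\in C^\infty_{\mathrm{ev}}([0,r])$ forces $f(|\,.\,|)\in C^\infty_{\mathrm{rad}}(\overline{\BB^{d+2}_r})$ with even radial profile $f$, the last sum is $\simeq\|f(|\,.\,|)\|_{H^k(\BB^{d+2}_r)}^2$ by \Cref{THMnorm} in dimension $d+2$, which is the claim. (The homogeneous statement on $\RR^d$, if wanted, is simpler: only the top seminorm $m=k+1$ survives, $D^{k+1}g=D^k f$, and \Cref{THMnorm}(2) is invoked directly in dimensions $d$ and $d+2$.)

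The main obstacle is exactly the homogeneous version of \Cref{THMnorm} used here. One cannot merely subtract the $L^2$-parts from the two sides of $\|G\|_{H^{k+1}(\BB^d_r)}^2\simeq\|G\|_{L^2(\BB^d_r)}^2+\sum_{j\geq1}\|(\,.\,)^{\frac{d-1}{2}+j}D^j g\|_{L^2(0,r)}^2$ and preserve the equivalence, since the $j=0$ term $\|(\,.\,)^{\frac{d-1}{2}}g\|_{L^2(0,r)}\simeq\|G\|_{L^2(\BB^d_r)}$ is not dominated by the higher ones (it is already nonzero for constant $g$). What saves the argument is a genuine cancellation: the ``wrong-weight'' pieces that a crude triangle-inequality estimate of $\|\pd^\beta G\|_{L^2}$ would create — e.g.\ the summand $(Dg)(|x|)=f(|x|)$ inside $\pd_i^2 G$, carrying the $j=0$ weight rather than the $j=1$ one — occur with coefficients that cancel upon integration by parts, leaving only boundary terms supported away from the degeneracy of the weights. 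Making this cancellation precise at every order, i.e.\ isolating the seminorm-by-seminorm content of the proof of \Cref{THMnorm}, is the technical heart; the identity $F_i=\pd_iG$, the combinatorial reduction, and the weight bookkeeping between dimensions $d$ and $d+2$ are elementary.
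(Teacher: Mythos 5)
Your argument is correct, but it takes a genuinely different route from the paper's. The paper works directly with the components: it establishes the Leibniz-type identity $\pd^{\alpha}_{x}F_{i}(x)=x_{i}\pd^{\alpha}_{x}f(|x|)+\alpha_{i}\pd^{\alpha-e_{i}}_{x}f(|x|)$, deduces the upper bound from \Cref{DerRad,Hardy,HardyBoundary}, and for the lower bound expands $|\pd^{\alpha}_{x}F_{i}(x)|^{2}$, sums over $i$, and observes that the cross term is an exact divergence, so that integrating over $\BB^{d}_{r}$ gives $\big\||\,.\,|\pd^{\alpha}f(|\,.\,|)\big\|_{L^{2}(\BB^{d}_{r})}\leq\sum_{i}\|\pd^{\alpha}F_{i}\|_{L^{2}(\BB^{d}_{r})}$ outright (this is where the $L^{2}$ structure is used). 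You instead introduce the radial potential $G$ with $\nabla G=F$ and $Dg=f$, reduce the claim to a seminorm version of \Cref{THMnorm} on the ball with the $j=0$ terms deleted from both sides, and shift the dimension by two through $\tfrac{d-1}{2}+j=\tfrac{(d+2)-1}{2}+(j-1)$; this is clean and arguably more conceptual, and you correctly identify that the homogeneous counterpart is not a formal consequence of the \emph{statement} of \Cref{THMnorm} but does follow from its \emph{proof}: there, the bound for $\|\pd^{\alpha}\varphi\|_{L^{p}}$ with $|\alpha|=m\geq 1$ involves only $D^{j}f_{\varphi}$ with $\lceil m/2\rceil\leq j\leq m$, hence $j\geq 1$, while the reverse bound controls each $j$ by the order-$j$ seminorm alone. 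The one weak spot is your own sketch of that reverse bound: the sum of $|\pd_{i_{1}}\cdots\pd_{i_{m}}\psi|^{2}$ over pairwise-distinct indices equals $m!\,e_{m}(x_{1}^{2},\ldots,x_{d}^{2})\,|(D^{m}f_{\psi})(|x|)|^{2}$ with $e_{m}$ the elementary symmetric polynomial, which is \emph{not} pointwise comparable to $|x|^{2m}$ (it vanishes on the coordinate axes for $m\geq 2$), so the recovery only works after spherical integration, and your ``induction on $m$'' for $d<m$ is left vague; you should simply invoke \eqref{Dnf} of \Cref{DerRad}, which gives the needed pointwise bound for all $d$ and $m$ and makes this step immediate. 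With that substitution your proof is complete; a side benefit of your route is that the reduction to $G$ and the multi-index counting carry over verbatim to $p$-th powers of $L^{p}$ norms, whereas the paper's divergence-theorem trick for the lower bound is tied to $p=2$.
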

\begin{remark}
The analogous statement of \Cref{THMcorot} for homogeneous Sobolev norms on $\RR^{d}$ has been proved in \cite{MR4469070} by employing Fourier techniques.
\end{remark}
\subsection{Notation}
The set of nonnegative integer numbers, real numbers and complex numbers is denoted by $\NN_{0}$, $\RR$ and $\CC$, respectively. In $d$-dimensional Euclidean space $\RR^{d}$, we denote the Euclidean length of $x = (x_{1},\ldots,x_{d})\in\RR^{d}$ by $|x| = \Big( \sum_{i=1}^{d} (x_{i})^{2} \Big)^{\frac{1}{2}}$. The open ball of radius $r>0$ centred around the origin is defined by $\BB^{d}_{r} = \{ x\in\RR^{d} \mid |x|<r \}$.
\par\medskip
First and second derivatives of a function $f$ in one real variable are denoted by $f'$ and $f''$, respectively, whereas the $j$-th derivative of $f$ is denoted by $f^{(j)}$. Given a function $\varphi$ in $d$ real variables, we denote its $j$-th partial derivative by $\pd_{j} \varphi$. As usual, for multi-indices $\alpha = (\alpha_{1},\ldots,\alpha_{d})\in\NN_{0}^{d}$, we employ for higher-order partial derivatives multi-index notation $\pd^{\alpha}\varphi = \pd_{1}^{\alpha_{1}}\ldots\pd_{d}^{\alpha_{d}}\varphi$. We define the length of a multi-index $\alpha\in\NN_{0}^{d}$ by $|\alpha| = \alpha_{1} + \ldots + \alpha_{d}$, the factorial by $\alpha! = \alpha_{1}! \cdot \ldots \cdot \alpha_{d}!$ and powers by $x^{\alpha} = x_{1}^{\alpha_{1}} \ldots x_{d}^{\alpha_{d}}$.
\par\medskip
When dealing with inequalities for nonnegative terms $a_{\lambda},b_{\lambda} \geq 0$ which depend on a variable $\lambda\in\Lambda$, we follow the usual convention and define the relation $a_{\lambda} \lesssim b_{\lambda}$ if there exists a constant $c>0$ such that $a_{\lambda} \leq c b_{\lambda}$ uniformly for all $\lambda\in\Lambda$. As customary, we define that $a_{\lambda} \gtrsim b_{\lambda}$ holds if $b_{\lambda} \lesssim a_{\lambda}$ holds, and $a_{\lambda}\simeq b_{\lambda}$ if both relations $a_{\lambda} \lesssim b_{\lambda}$ and $a_{\lambda} \gtrsim b_{\lambda}$ hold. This notation can be flexibly applied e.g., to uniform pointwise estimates for functions or to the equivalence between norms on function spaces.
\section{Smooth radial functions}
In the following, we elucidate some relations between a radially symmetric function and its radial profile. Recall that for $d\geq 2$, a function $\varphi: \BB^{d}_{r} \rightarrow \CC$ is \emph{radially symmetric} if $\varphi(Rx) = \varphi(x)$ for any rotation $R\in SO(d)$ and for all $x\in \BB^{d}_{r}$.
\subsection{Radial profiles}
We begin with an elementary fact which is attributed to H. Whitney \cite{MR7783}.
\begin{lemma}
\label{RepresentativeEven}
A function $f:[0,r]\rightarrow\CC$ belongs to $C^{\infty}_{\mathrm{ev}}([0,r])$ if and only if there is a function $\widetilde{f}\in C^{\infty}([0,r^{2}])$ such that
\begin{equation*}
f(\rho) = \widetilde{f}(\rho^{2})
\end{equation*}
for all $\rho\in[0,r]$.
\end{lemma}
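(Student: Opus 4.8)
The plan is to prove the two implications separately. The reverse implication is straightforward: if $f(\rho)=\widetilde f(\rho^{2})$ with $\widetilde f\in C^{\infty}([0,r^{2}])$, then $f$ is smooth on $[0,r]$ as a composition of smooth maps, and the function $F(\rho):=\widetilde f(\rho^{2})$, now regarded on $\rho\in[-r,r]$, is smooth and satisfies $F(-\rho)=F(\rho)$. Differentiating this identity $2n+1$ times and setting $\rho=0$ forces $f^{(2n+1)}(0)=F^{(2n+1)}(0)=0$ for every $n\in\NN_{0}$, so $f\in C^{\infty}_{\mathrm{ev}}([0,r])$.

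For the forward implication, the candidate $\widetilde f$ is forced to be $\widetilde f(s):=f(\sqrt s)$ for $s\in(0,r^{2}]$, extended by $\widetilde f(0):=f(0)$, and the task is to show that this (manifestly $n$-independent) function lies in $C^{m}([0,r^{2}])$ for every $m$. The starting point is Taylor's formula with integral remainder at order $2n-1$: since $f^{(2j+1)}(0)=0$, the odd-order terms drop out and one obtains, for each $n\geq 1$,
\begin{equation*}
f(\rho)=\sum_{j=0}^{n-1}\frac{f^{(2j)}(0)}{(2j)!}\,\rho^{2j}+\rho^{2n}g_{n}(\rho),\qquad g_{n}(\rho):=\frac{1}{(2n-1)!}\int_{0}^{1}(1-t)^{2n-1}f^{(2n)}(t\rho)\dd t,
\end{equation*}
with $g_{n}\in C^{\infty}([0,r])$ by differentiation under the integral sign. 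Substituting $\rho=\sqrt s$ turns the polynomial part into an honest polynomial in $s$, so everything reduces to showing that $s\mapsto s^{n}g_{n}(\sqrt s)$ belongs to $C^{n-1}([0,r^{2}])$.

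The key point is a derivative estimate near $s=0$: since $g_{n}$ is smooth, a chain-rule computation gives $\frac{\dd^{q}}{\dd s^{q}}g_{n}(\sqrt s)=\sum_{i=1}^{q}c_{q,i}\,s^{i/2-q}g_{n}^{(i)}(\sqrt s)=O(s^{1/2-q})$ as $s\to0^{+}$ for $q\geq 1$; combined with Leibniz' rule this yields $\frac{\dd^{m}}{\dd s^{m}}\big[s^{n}g_{n}(\sqrt s)\big]=O(s^{\,n-m})$ for $0\leq m\leq n$. Taking $n=m+1$, all derivatives of $\widetilde f$ up to order $m$ extend continuously to $s=0$, and an elementary boundary-regularity argument (write $\phi^{(l-1)}(s)-\phi^{(l-1)}(\varepsilon)=\int_{\varepsilon}^{s}\phi^{(l)}$ and let $\varepsilon\to0^{+}$, then iterate in $l$) upgrades this to $\widetilde f\in C^{m}([0,r^{2}])$. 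Since $m$ was arbitrary and $\widetilde f$ does not depend on $m$, this gives $\widetilde f\in C^{\infty}([0,r^{2}])$. I expect the main obstacle to be precisely this last step: justifying the $s^{i/2-q}$ form of the iterated derivatives of $g_{n}(\sqrt s)$ and handling the boundary point $s=0$ rigorously, so that the cancellation between the $\sqrt s$-singularity and the vanishing factor $s^{n}$ is made precise; the remaining steps are routine.
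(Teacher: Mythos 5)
Your proof is correct, but it follows a different route from the paper's. For the forward direction the paper does not split off a Taylor polynomial at all: it defines $\widetilde f(s)=f(\sqrt s)$ and proves by induction (differentiating and integrating by parts) the exact closed-form identity
\begin{equation*}
\widetilde{f}^{(n)}(\rho^{2}) = \frac{1}{2^{2n-1} (n-1)!} \int_{0}^1 (1-t^{2})^{n-1} f^{(2n)}(t\rho) \dd t \,, \quad \rho>0 \,,
\end{equation*}
where the hypothesis $f^{(2n+1)}(0)=0$ enters through the vanishing of the boundary term at $t=0$ in the integration by parts; the existence of $\lim_{\rho\to0}\widetilde f^{(n)}(\rho)$ is then immediate from this formula. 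Your argument instead feeds the hypothesis in through the dropping of the odd Taylor coefficients and then runs an order-of-vanishing count on the remainder $s^{n}g_{n}(\sqrt s)$ via Fa\`a di Bruno and Leibniz. Both proofs ultimately reduce to the same boundary-regularity fact (continuous extendability of all derivatives to $s=0$ implies smoothness up to the boundary), and your chain-rule estimate $\frac{\dd^{q}}{\dd s^{q}}g_{n}(\sqrt s)=O(s^{1/2-q})$ together with the choice $n=m+1$ does close the gap you flag as the main obstacle. What the paper's route buys is brevity and an explicit formula for every derivative of $\widetilde f$ with no bookkeeping of singular exponents; what yours buys is that it is the classical Whitney-style argument, requiring only Taylor's theorem and crude $O$-estimates rather than guessing the correct integral identity in advance. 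The reverse implication is handled the same way in both.
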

\begin{proof}
``$\Rightarrow$'': Let $f\in C^{\infty}_{\mathrm{ev}}([0,r])$. Define $\widetilde{f}\in C^{\infty}((0,r^{2}])$ by $\widetilde{f}(\rho) = f(\sqrt{\rho})$. We claim that for all $n\geq 1$, the $n$-th deriviative of $\widetilde{f}$ is given by
\begin{equation}
\label{proofWhitney}
\widetilde{f}^{(n)}(\rho^{2}) = \frac{1}{2^{2n-1} (n-1)!} \int_{0}^1 (1-t^{2})^{n-1} f^{(2n)}(t\rho) \dd t \,, \quad \rho>0 \,.
\end{equation}
Indeed, differentiating the identity $\widetilde{f}(\rho^{2}) = f(\rho)$ and using the fundamental theorem of calculus yields
\begin{equation*}
\widetilde{f}'(\rho^{2}) = \frac{1}{2} \int_{0}^1 f''(t\rho) \dd t \,.
\end{equation*}
Assuming that the claim \eqref{proofWhitney} holds for an arbitrary but fixed $n\geq 1$, differentiating it and performing an integration by parts yields
\begin{align*}
2\rho \widetilde{f}^{(n+1)}(\rho^{2}) &= \frac{1}{2^{2n-1}(n-1)!} \int_{0}^1 (1-t^{2})^{n-1} t f^{(2n+1)}(t\rho) \dd t \\&=
- \frac{1}{2^{2n-1} (n-1)!} \int_{0}^1 \frac{1}{2n} \pd_t (1-t^{2})^{n} f^{(2n+1)}(t\rho) \dd t \\&=
\frac{\rho}{2^{2n} n!} \int_{0}^1 (1-t^{2})^{n} f^{(2(n+1))}(t\rho) \dd t
\end{align*}
and hence the claim is proved. Thus $\lim_{\rho\to 0}\widetilde{f}^{(n)}(\rho)$ exists for all $n\geq 1$, so $\widetilde{f}\in C^{\infty}([0,r^{2}])$.
\par\medskip
``$\Leftarrow$'': If $\widetilde{f}\in C^{\infty}([0,r^{2}])$, then the function $f$ defined by $f(\rho) = \widetilde{f}(\rho^{2})$ is smooth and even.
\end{proof}
This yields an important characterization of smooth radial functions.
\begin{lemma}
\label{RepresentativeRadial}
Let $d\geq 2$ be an integer and $r>0$. A function $\varphi: \BB^{d}_{r} \rightarrow \CC$ belongs to $C^{\infty}_{\mathrm{rad}}(\overline{\BB^{d}_{r}})$ if and only if there is a function $f_{\varphi}\in C^{\infty}_{\mathrm{ev}}([0,r])$ such that
\begin{equation*}
\varphi(x) = f_{\varphi}(|x|)
\end{equation*}
for all $x\in\BB^{d}_{r}$.
\end{lemma}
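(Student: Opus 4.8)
The plan is to establish both implications directly, outsourcing the one-dimensional smoothness bookkeeping to \Cref{RepresentativeEven} and using only the transitivity of the $SO(d)$-action on spheres.

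For ``$\Leftarrow$'', suppose $f_{\varphi}\in C^{\infty}_{\mathrm{ev}}([0,r])$ and put $\varphi(x)=f_{\varphi}(|x|)$. Radial symmetry is immediate since $|Rx|=|x|$ for $R\in SO(d)$, so the only issue is smoothness up to the boundary. Here I would apply \Cref{RepresentativeEven} to obtain $\widetilde{f}_{\varphi}\in C^{\infty}([0,r^{2}])$ with $f_{\varphi}(\rho)=\widetilde{f}_{\varphi}(\rho^{2})$, whence $\varphi(x)=\widetilde{f}_{\varphi}(|x|^{2})$. Since $x\mapsto|x|^{2}=x_{1}^{2}+\dots+x_{d}^{2}$ is a polynomial carrying $\overline{\BB^{d}_{r}}$ into $[0,r^{2}]$, the composition $\varphi$ belongs to $C^{\infty}(\overline{\BB^{d}_{r}})$, hence to $C^{\infty}_{\mathrm{rad}}(\overline{\BB^{d}_{r}})$.

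For ``$\Rightarrow$'', suppose $\varphi\in C^{\infty}_{\mathrm{rad}}(\overline{\BB^{d}_{r}})$ and define $f_{\varphi}\colon[0,r]\to\CC$ by $f_{\varphi}(\rho)=\varphi(\rho e_{1})$ with $e_{1}=(1,0,\dots,0)$. As the composition of the smooth curve $\rho\mapsto\rho e_{1}$, which maps $[0,r]$ into $\overline{\BB^{d}_{r}}$, with $\varphi$, the chain rule gives $f_{\varphi}\in C^{\infty}([0,r])$. The representation $\varphi(x)=f_{\varphi}(|x|)$ then follows from the fact that, for $d\geq2$, $SO(d)$ acts transitively on each sphere $\{\,|x|=\rho\,\}$: for $|x|=\rho>0$ pick $R\in SO(d)$ with $R(\rho e_{1})=x$ and use radial symmetry to get $\varphi(x)=\varphi(R(\rho e_{1}))=\varphi(\rho e_{1})=f_{\varphi}(\rho)$, the case $\rho=0$ being trivial.

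What remains, and this is the only step requiring a small idea, is to check that $f_{\varphi}$ is \emph{even}, i.e. $f_{\varphi}^{(2n+1)}(0)=0$ for all $n\in\NN_{0}$. I would extend $f_{\varphi}$ to $(-r,r)$ by $g(\rho):=\varphi(\rho e_{1})$, which is smooth there by the same argument and agrees with $f_{\varphi}$ on $[0,r)$; in particular $f_{\varphi}^{(j)}(0)=g^{(j)}(0)$ for all $j$. Choosing $R_{0}\in SO(d)$ with $R_{0}e_{1}=-e_{1}$ --- for instance the rotation by $\pi$ in the $x_{1}x_{2}$-plane, which lies in $SO(d)$ exactly because $d\geq2$ --- radial symmetry gives $g(-\rho)=\varphi(-\rho e_{1})=\varphi(R_{0}(\rho e_{1}))=\varphi(\rho e_{1})=g(\rho)$, so $g$ is an even smooth function near the origin. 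Consequently all of its odd-order derivatives vanish at $0$, which yields $f_{\varphi}\in C^{\infty}_{\mathrm{ev}}([0,r])$ and finishes the proof. The expected main obstacle is precisely this vanishing of odd derivatives at the center, and passing to the even extension $g$ on the symmetric interval is the clean way around it.
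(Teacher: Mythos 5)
Your proposal is correct and follows essentially the same route as the paper: define $f_{\varphi}(\rho)=\varphi(\rho e)$ for a fixed unit vector, use transitivity of the $SO(d)$-action for the representation and a reflection-type rotation for evenness, and invoke \Cref{RepresentativeEven} for smoothness in the converse direction. You merely spell out the details (smoothness of the composition, the even extension $g$ on $(-r,r)$, vanishing of odd derivatives at $0$) that the paper's two-line proof leaves implicit.
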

\begin{proof}
``$\Rightarrow$'': Assume $\varphi\in C^{\infty}_{\mathrm{rad}}(\overline{\BB^{d}_{r}})$. Define $f_{\varphi}\in C^{\infty}([0,r])$ by $f_{\varphi}(\rho) = \varphi(\rho e_{d})$, where $e_{d}\in\RR^{d}$ is the $d$-th standard basis vector. Then $f_{\varphi}$ is a smooth and even function such that $\varphi(x) = f_{\varphi}(|x|)$.
\par\medskip
``$\Leftarrow$'': Conversely, suppose $f\in C^{\infty}_{\mathrm{ev}}([0,r])$ and define $\varphi(x) = f(|x|)$. Then $\varphi$ is radial and smooth by \Cref{RepresentativeEven}.
\end{proof}
\begin{remark}
The analogous statements of \Cref{RepresentativeEven,RepresentativeRadial} clearly apply to functions $f:[0,\infty)\rightarrow\CC$ and $\varphi:\RR^{d}\rightarrow\CC$, respectively, too.
\end{remark}
\subsection{Derivatives of radial functions}
A key towards \Cref{THMnorm} is to expand partial derivatives of smooth radial functions in terms of the action of the derivation
\begin{equation*}
D: C^{\infty}_{\mathrm{ev}}([0,r]) \rightarrow C^{\infty}_{\mathrm{ev}}([0,r]) \,, \quad f \mapsto \Big( \rho \mapsto \rho^{-1} f'(\rho) \Big) \,,
\end{equation*}
on their radial representative, and vice versa. Incidentally, identities for partial derivatives of radial functions have been known already to R. Lyons and K. Zumbrun \cite{MR1227524}.
\begin{lemma}
\label{DerRad}
Let $d\geq 2$ be an integer and $r>0$. Let $\varphi \in C^{\infty}_{\mathrm{rad}}(\overline{\BB^{d}_{r}})$ with radial profile $f_{\varphi} \in C^{\infty}_{\mathrm{ev}}([0,r])$ according to \Cref{RepresentativeRadial}. Then, for any $n\in\mathbb{N}$ and any multi-index $\alpha\in\mathbb{N}_{0}^{d}$ of length $|\alpha|=n$ we have
\begin{equation}
\label{pdalphaphi}
\pd^{\alpha}_{x} \varphi(x) = \sum_{j=\left\lceil\frac{n}{2}\right\rceil}^{n} \frac{1}{2^{n-j} (n-j)!} \big( \Delta^{n-j}_{x} x^{\alpha} \big) \big( D^{j} f_{\varphi} \big)(|x|) \,,
\end{equation}
where $\Delta_{x}$ denotes the Laplace operator. Conversely, for any $n\in\mathbb{N}$ and multi-index $\alpha\in\mathbb{N}_{0}^{d}$ of length $|\alpha|=n$ there are homogeneous polynomial functions $q_{\alpha}$ of degree $2n$ such that
\begin{equation}
\label{Dnf}
|x|^{n} (D^{n} f_{\varphi})(|x|) = \sum_{|\alpha|=n} q_{\alpha}(\tfrac{x}{|x|}) \pd^{\alpha}_{x} \varphi(x) \,.
\end{equation}
\end{lemma}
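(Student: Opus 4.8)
The plan is to prove both identities \eqref{pdalphaphi} and \eqref{Dnf} by induction on $n=|\alpha|$, exploiting the algebraic structure of the derivation $D$ and the fact that, for a radial function $\varphi(x)=f_\varphi(|x|)$, one has the basic relation $\pd_i\varphi(x)=x_i\,(Df_\varphi)(|x|)$, which follows from $\pd_i|x|=x_i/|x|$ and $(Df)(\rho)=\rho^{-1}f'(\rho)$. More generally, I would first record the single-step rule: if $g\in C^\infty_{\mathrm{ev}}([0,r])$ and $\psi(x)=g(|x|)$, then $\pd_i\psi(x)=x_i\,(Dg)(|x|)$, and consequently, for any homogeneous polynomial $P$ of degree $m$,
\begin{equation*}
\pd_i\big(P(x)(D^{j}f_\varphi)(|x|)\big)=(\pd_i P)(x)(D^{j}f_\varphi)(|x|)+x_i P(x)(D^{j+1}f_\varphi)(|x|).
\end{equation*}
Applying $\pd_i$ to the claimed formula \eqref{pdalphaphi} for $\alpha$ (so $\pd^{\alpha+e_i}\varphi$) and using this product rule, the right-hand side becomes a sum of two pieces: one with $\Delta^{n-j}_x\pd_i(x^\alpha)$ multiplying $(D^{j}f_\varphi)(|x|)$, and one with $x_i\,\Delta^{n-j}_x(x^\alpha)$ multiplying $(D^{j+1}f_\varphi)(|x|)$. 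After relabelling indices and invoking the standard commutator identity $\Delta(x_i u)=x_i\Delta u+2\pd_i u$ — equivalently $x_i\Delta^{\ell}(x^\alpha)=\Delta^{\ell}(x^\alpha x_i)-2\ell\,\Delta^{\ell-1}\pd_i(x^\alpha)$ — the two pieces combine, and the binomial bookkeeping of the coefficients $\frac{1}{2^{n-j}(n-j)!}$ against $\frac{1}{2^{(n+1)-j}((n+1)-j)!}$ reproduces \eqref{pdalphaphi} with $\alpha+e_i$ in place of $\alpha$ and $n+1$ in place of $n$. The summation range $\lceil n/2\rceil\le j\le n$ is preserved because $\Delta^{n-j}_x x^\alpha$ vanishes identically once $2(n-j)>n$, i.e. $j<n/2$, and the top term $j=n$ always survives with coefficient $1$ since $\Delta^0 x^\alpha=x^\alpha$. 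The base case $n=1$ is exactly $\pd_i\varphi(x)=x_i(Df_\varphi)(|x|)$.

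For the converse identity \eqref{Dnf}, the idea is to invert the (finite, triangular) linear system \eqref{pdalphaphi} expressing $\{\pd^\alpha\varphi\}_{|\alpha|=n}$ in terms of $\{(D^j f_\varphi)(|x|)\}_{j=\lceil n/2\rceil}^n$. Concretely, I would isolate the top-order term by noting that \eqref{pdalphaphi} gives, for any fixed $\alpha$ with $|\alpha|=n$,
\begin{equation*}
x^\alpha (D^{n}f_\varphi)(|x|)=\pd^\alpha_x\varphi(x)-\sum_{j=\lceil n/2\rceil}^{n-1}\frac{1}{2^{n-j}(n-j)!}\big(\Delta^{n-j}_x x^\alpha\big)(D^j f_\varphi)(|x|),
\end{equation*}
and then feed in the expressions for the lower-order $(D^j f_\varphi)(|x|)$ with $j<n$, obtained from the same identity applied at levels $2j\le n$ (so that a multi-index of length $j$ is available). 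An efficient way to make this precise: choose a convenient $\alpha$, say $\alpha=(n,0,\ldots,0)$ so $x^\alpha=x_1^n$; then $x_1^n(D^n f_\varphi)(|x|)=\pd_1^n\varphi(x)-(\text{lower-order in }D)$. Multiplying through by $|x|^n/x_1^n$ is not permissible (it is not polynomial), so instead I would argue by a dimension count: the right-hand side of \eqref{Dnf} is required to be $|x|^n(D^n f_\varphi)(|x|)$, which equals $|x|^{2n}\cdot$ (a smooth function of $|x|^2$) times $(D^n f_\varphi)$ expressed via $\widetilde f_\varphi$, hence is a polynomial-type expression homogeneous of degree $2n$ in the unit vector $x/|x|$ when paired against the degree-$n$ homogeneity of $\pd^\alpha\varphi$. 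The cleanest route is: start from the already-proven \eqref{pdalphaphi}, contract $\pd^\alpha\varphi$ against the polynomials $\overline{x^\alpha}$ summed over $|\alpha|=n$, use the identity $\sum_{|\alpha|=n}\frac{n!}{\alpha!}(\Delta^{n-j}x^\alpha)x^\alpha = c_{n,j}|x|^{2j}$ for explicit positive constants $c_{n,j}$ (a consequence of $\sum_{|\alpha|=n}\frac{n!}{\alpha!}x^\alpha y^\alpha=(x\cdot y)^n$ and applying $\Delta_x^{n-j}$), thereby obtaining a single scalar identity $\sum_{|\alpha|=n}\frac{n!}{\alpha!}x^\alpha\pd^\alpha\varphi(x)=\sum_{j=\lceil n/2\rceil}^n d_{n,j}|x|^{2j}(D^j f_\varphi)(|x|)$. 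This, together with the analogous identities at all lower levels $m\le n$ (applied to the radial functions $D^{n-m}f_\varphi$, whose profiles are again even and smooth), gives a lower-triangular invertible system for the $|x|^{2j}(D^j f_\varphi)(|x|)$; solving it at the top level produces $|x|^{2n}(D^n f_\varphi)(|x|)$, and hence $|x|^n(D^n f_\varphi)(|x|)$ after dividing by $|x|^n$ inside the already-homogeneous expression, as a linear combination of $\pd^\alpha\varphi(x)$ with coefficients that are homogeneous of degree $2n-n=n$ in $x$ — but to land exactly on the stated form with $q_\alpha(x/|x|)$ homogeneous of degree $2n$, I would instead keep the extra factor $|x|^n$ distributed as $|x|^n=|x|^{2n}/|x|^n$ and absorb $|x|^{2n}$ into polynomials $q_\alpha$ evaluated at $x/|x|$, using that each $\pd^\alpha\varphi$ carries homogeneity $-n$ relative to the radial variable so the net scaling works out.

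I expect the main obstacle to be the bookkeeping in the second part: cleanly identifying the triangular structure and the nonvanishing of the diagonal coefficients $d_{n,j}$ (in particular $d_{n,n}\ne 0$), and massaging the solved expression into precisely the homogeneous-degree-$2n$ form $q_\alpha(x/|x|)$ demanded by \eqref{Dnf} rather than some equivalent but differently-normalised form. The first part — the induction establishing \eqref{pdalphaphi} — should be routine once the product rule and the Laplacian commutator identity are in hand; care is only needed at the lower boundary $j=\lceil n/2\rceil$ of the summation, where one must check that the terms generated by differentiation that would fall outside the range are killed by $\Delta^{n-j}x^\alpha\equiv 0$, which is automatic from degree considerations. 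A subtle but harmless point throughout is that $D$ maps $C^\infty_{\mathrm{ev}}([0,r])$ to itself (stated in the excerpt), so every $(D^j f_\varphi)(|x|)$ appearing is itself a genuine smooth radial function on $\overline{\BB^d_r}$, legitimising the repeated application of \eqref{pdalphaphi} to $D^{n-m}f_\varphi$ in the inversion step.
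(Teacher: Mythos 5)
Your treatment of the first identity \eqref{pdalphaphi} is sound and genuinely different from the paper's: the paper simply imports the expansion from Lyons--Zumbrun \cite{MR1227524} (in $d$-index notation, with coefficients $\tfrac{1}{2^{j}j!}\Delta^{j}(x_{i_{1}}\cdots x_{i_{n}})$), whereas you reprove it by induction on $n$ using the product rule $\pd_{i}\big(P(x)(D^{j}f)(|x|)\big)=(\pd_{i}P)(x)(D^{j}f)(|x|)+x_{i}P(x)(D^{j+1}f)(|x|)$ together with the iterated commutator identity $\Delta^{\ell}(x_{i}u)=x_{i}\Delta^{\ell}u+2\ell\,\pd_{i}\Delta^{\ell-1}u$. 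The coefficient bookkeeping you describe does close up, and your observation that out-of-range terms are killed because $\Delta^{n-j}x^{\alpha}\equiv 0$ once $2(n-j)>n$ is exactly right; this is an acceptable self-contained substitute for the citation.

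The second identity \eqref{Dnf} is where your argument has a genuine gap. Your single contraction $\sum_{|\alpha|=n}\tfrac{n!}{\alpha!}x^{\alpha}\pd^{\alpha}\varphi(x)=\sum_{j}d_{n,j}|x|^{2j}(D^{j}f_{\varphi})(|x|)$ is one equation in the $\lfloor n/2\rfloor+1$ unknowns $(D^{j}f_{\varphi})(|x|)$, $\lceil n/2\rceil\le j\le n$, so for $n\ge 2$ you need further equations, and neither of your proposed sources produces them in the required form. If you apply the lower-level identities to the auxiliary radial functions $(D^{n-m}f_{\varphi})(|\cdot|)$, the resulting left-hand sides involve partial derivatives of those auxiliary functions rather than of $\varphi$, so solving the system expresses $|x|^{2n}(D^{n}f_{\varphi})(|x|)$ in terms of the wrong objects (and relating them back to $\pd^{\alpha}\varphi$ is precisely what is being proved). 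If instead you apply the levels $m<n$ to $\varphi$ itself, the solved expression involves $\pd^{\beta}\varphi$ with $|\beta|=m<n$, whose coefficients $x^{\beta}/|x|^{n}$ are not functions of $x/|x|$ alone --- so you do not land on \eqref{Dnf}, which demands only multi-indices of length exactly $n$ and coefficients $q_{\alpha}(x/|x|)$. The closing homogeneity argument (``each $\pd^{\alpha}\varphi$ carries homogeneity $-n$'') is false for a general radial $\varphi$ and cannot repair this. The missing idea is to generate all the needed equations at the single level $n$: contract \eqref{pdalphaphi} against each member of a family of test polynomials evaluated at $x/|x|$ --- the paper uses the polynomials $p^{i}_{I}(x/|x|)$, $i=0,\dots,\lfloor n/2\rfloor$, themselves --- so that every left-hand side is a combination of $n$-th order derivatives of $\varphi$ with coefficients depending only on $x/|x|$, and the coefficient matrix becomes a constant Gram matrix by rotational invariance. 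Proving that this Gram matrix is invertible (done in the paper by evaluating the $p^{j}_{I}$ at $e_{d}$ for suitably chosen indices $I$, which yields a triangular elimination of the $\lambda_{i}$) is the actual crux of the lemma, and it is absent from your proposal.
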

\begin{proof}
We switch from multi-index notation to $d$-index notation. Namely, for $d,n\in\mathbb{N}$, a \emph{$d$-index of length $n$} is a tuple $I = (i_{1},\ldots,i_{n})$ of numbers $i_{1},\ldots,i_{n} \in \{1,\ldots,d\}$. The set of all $d$-indices of length $n$ is given by the Cartesian product $\mathcal{I}^{n}_{d} := \{1,\ldots,d\}^{n}$. Sums over $d$-indices are denote by $\sum_{I\in\mathcal{I}^{n}_{d}} = \sum_{i_{1}=1}^{d} \ldots \sum_{i_{n}=1}^{d}$ and partial derivatives are denoted in $d$-index notation by $\pd_{I} \varphi := \pd_{i_{1}} \ldots \pd_{i_{n}} \varphi$. Using this notation, the result of \cite{MR1227524} states that for any $d$-index $I\in\mathcal{I}^{n}_{d}$ of length $n\in\mathbb{N}$ it holds that
\begin{equation}
\label{pdIf}
(\pd_{I} \varphi)(x) = \sum_{j=0}^{\left\lfloor\frac{n}{2}\right\rfloor} p^{j}_{I}(x) (D^{n-j} f_{\varphi})(|x|) \,,
\end{equation}
where $p^{j}_{I}$ are homogeneous polynomial functions of degree $n-2j$ that are given by
\begin{equation}
\label{Polynomials}
p^{j}_{I}(x) = \frac{1}{2^{j}j!} \Delta_{x}^{j} \Big( x_{i_{1}} \ldots x_{i_{n}} \Big) \,.
\end{equation}
This yields \Cref{pdalphaphi}. Then, upon multiplying \Cref{pdIf} by $p^{i}_{I}(\tfrac{x}{|x|})$ and summing the result over all $I\in\mathcal{I}^{n}_{d}$, we infer the equations
\begin{equation}
\label{linearSystem}
\sum_{I\in\mathcal{I}^{n}_{d}} p^{i}_{I}(\tfrac{x}{|x|}) (\pd_{I} \varphi)(x) =
\sum_{j=0}^{\left\lfloor\frac{n}{2}\right\rfloor} \Big(
\sum_{I\in\mathcal{I}^{n}_{d}} p^{i}_{I}(\tfrac{x}{|x|}) p^{j}_{I}(\tfrac{x}{|x|})
\Big) |x|^{n-2j} (D^{n-j} f_{\varphi})(|x|)
\end{equation}
for $i=0,\ldots,\left\lfloor\frac{n}{2}\right\rfloor$. Since the Laplace operator is invariant under rotations $R\in SO(d)$, i.e.,
\begin{equation*}
(\Delta \varphi)\circ R = \Delta (\varphi\circ R) \,,
\end{equation*}
we infer from formula \eqref{Polynomials}
\begin{equation*}
p^{n}_{I}(Rx) = \sum_{J\in\mathcal{I}^{n}_{d}} R_{i_{1}j_{1}} \ldots R_{i_{n} j_{n}} p^{n}_{J}(x)
\end{equation*}
for all $R \in SO(d)$ with components $R_{ij}$, $i,j=1,\ldots,d$. Now, for each fixed $x\in\RR^{d}\setminus\{0\}$ there is a rotation $R = R(x)\in SO(d)$ such that $\tfrac{x}{|x|} = R e_{d}$, where $e_{d}\in \RR^{d}$ is the $d$-th standard basis vector. Consequently, together with
\begin{equation*}
\sum_{\ell = 1}^{d} R_{\ell i} R_{\ell j} = \delta_{ij} \,,
\end{equation*}
we get that the symmetric coefficients
\begin{equation}
\label{GramMatrix}
\sum_{I\in\mathcal{I}^{n}_{d}}
p^{i}_{I}(\tfrac{x}{|x|})
p^{j}_{I}(\tfrac{x}{|x|}) =
\sum_{I\in\mathcal{I}^{n}_{d}}
p^{i}_{I}(e_{d})
p^{j}_{I}(e_{d}) =: \gamma_{ij}(d,n)
\end{equation}
which appear for $i,j=0,\ldots,\left\lfloor\frac{n}{2}\right\rfloor$ in \Cref{linearSystem}, are the same for all $x\in\overline{\BB^{d}_{r}}$ and hence \Cref{linearSystem} can be read as a linear system of equations
\begin{equation}
\label{linearSystem2}
\sum_{I\in\mathcal{I}^{n}_{d}} p^{i}_{I}(\tfrac{x}{|x|}) (\pd_{I} \varphi)(x) =
\sum_{j=0}^{\left\lfloor\frac{n}{2}\right\rfloor} \gamma_{ij}(d,n) |x|^{n-2j} (D^{n-j} f_{\varphi})(|x|)
\end{equation}
for $i=0,\ldots,\left\lfloor\frac{n}{2}\right\rfloor$. To prove \Cref{Dnf}, we show that this linear system is invertible. For this, observe that $\gamma_{ij}(d,n)$ are the entries of the Gram matrix of the vectors
\begin{equation}
\label{vectors}
v_{i} = \Big( p^{i}_{I}(e_{d}) \Big)_{I\in\mathcal{I}^{n}_{d}} \in \RR^{d^{n}} \,, \qquad i=0,\ldots,\left\lfloor\frac{n}{2}\right\rfloor \,,
\end{equation}
and that this Gram matrix is invertible if and only if the vectors \eqref{vectors} are linearly independent. To prove the latter, let $\lambda_{0},\ldots,\lambda_{\left\lfloor\frac{n}{2}\right\rfloor}\in\RR$ such that $\lambda_{0} v_{0} + \ldots + \lambda_{\left\lfloor\frac{n}{2}\right\rfloor} v_{\left\lfloor\frac{n}{2}\right\rfloor} = 0$, i.e.,
\begin{equation*}
\sum_{i=0}^{\left\lfloor\frac{n}{2}\right\rfloor} \lambda_{i} p^{i}_{I}(e_{d}) = 0 \quad\text{for all } I \in \mathcal{I}^{n}_{d} \,.
\end{equation*}
First, consider in this linear combination the entry
\begin{equation*}
I=(i_{1},\ldots,i_{n}) \qquad\text{determined by}\qquad
i_{\ell} =
\renewcommand{\arraystretch}{1.5}
\left\{
\begin{array}{llc}
d & \text{if} &1 \leq \ell \leq 2\left\lceil\frac{n}{2}\right\rceil-n \,, \\
1 & \text{if} &2\left\lceil\frac{n}{2}\right\rceil-n < \ell \leq n \,.
\end{array}
\right.
\end{equation*}
For this particular $d$-index, formula \eqref{Polynomials} implies
\begin{equation*}
p^{j}_{I}(e_{d})
\neq 0 \quad \text{if } j = \left\lfloor\frac{n}{2}\right\rfloor \,,
\qquad\text{but}\qquad
p^{j}_{I}(e_{d}) = 0 \quad \text{if } 0 \leq j < \left\lfloor\frac{n}{2}\right\rfloor \,.
\end{equation*}
This forces $\lambda_{\left\lfloor\frac{n}{2}\right\rfloor} = 0$. To proceed, assume that $\lambda_{\left\lfloor\frac{n}{2}\right\rfloor},\ldots,\lambda_{\left\lfloor\frac{n}{2}\right\rfloor-i} = 0$ for some $0\leq i < \left\lfloor\frac{n}{2}\right\rfloor$. Then choose the entry
\begin{equation*}
I=(i_{1},\ldots,i_{n}) \qquad\text{determined by}\qquad
i_{\ell} =
\renewcommand{\arraystretch}{1.5}
\left\{
\begin{array}{llc}
d & \text{if} & 1 \leq \ell \leq 2\Big(\left\lceil\frac{n}{2}\right\rceil + i + 1 \Big) - n \,, \\
1 & \text{if} & 2\Big(\left\lceil\frac{n}{2}\right\rceil + i + 1 \Big) - n < \ell \leq n \,,
\end{array}
\right.
\end{equation*}
so that we have once more by formula \eqref{Polynomials}
\begin{equation*}
p^{j}_{I}(e_{d})
\neq 0 \quad \text{if } j = \left\lfloor\frac{n}{2}\right\rfloor - i - 1 \,,
\qquad\text{but}\qquad
p^{j}_{I}(e_{d}) = 0 \quad \text{if } 0 \leq j < \left\lfloor\frac{n}{2}\right\rfloor - i - 1 \,.
\end{equation*}
Hence, $\lambda_{\left\lfloor\frac{n}{2}\right\rfloor-i-1} = 0$ follows. From this procedure we infer $\lambda_{\left\lfloor\frac{n}{2}\right\rfloor} = \ldots = \lambda_{0} = 0$ and therefore the Gram matrix composed of the entries $\gamma_{ij}(d,n)$ is invertible. Denoting by $\gamma^{ij}(d,n)$ the components of the inverse of the Gram matrix, we can invert the linear system of equations \eqref{linearSystem2} from above for each $x\in\overline{\BB^{d}_{r}}$ to get
\begin{equation*}
|x|^{n} (D^{n} f_{\varphi})(|x|) = \sum_{I\in\mathcal{I}^{n}_{d}} \sum_{j=0}^{\left\lfloor\frac{n}{2}\right\rfloor}  \gamma^{0j}(d,n) p^{j}_{I}(\tfrac{x}{|x|}) (\pd_{I} \varphi)(x) \,.
\end{equation*}
Rearranging the right-hand side in terms of multi-indices yields \Cref{Dnf}.
\end{proof}
\begin{remark}
The identities \eqref{pdalphaphi}, \eqref{Dnf} in \Cref{DerRad} are pointwise and therefore hold for functions $\varphi\in C^{\infty}_{\mathrm{rad}}(\RR^{d})$ with radial representative $f\in C^{\infty}_{\mathrm{ev}}([0,\infty))$ too.
\end{remark}
\section{Hardy-type inequalities}
In this section, we prove Hardy-type inequalities in order to trade singular weights for derivatives within $L^{p}$-norms.
\begin{lemma}
\label{Hardy}
Let $p,r,s\in\RR$ with $p\geq 1$, $r>0$ and $s > -\frac{1}{p}$.
\begin{enumerate}[itemsep=1em,topsep=1em]
\item There is a constant $C_{p,r,s} > 0$ such that the estimate
\begin{equation*}
\big\| (\,.\,)^{s} f \big\|_{L^{p}(0,r)} \leq C_{p,r,s} \Big( |f(r)| + \big\| (\,.\,)^{s+1} f' \big\|_{L^{p}(0,r)} \Big)
\end{equation*}
holds for all $f\in C^1([0,r])$.
\medskip
\item There is a constant $C_{p,s} > 0$ such that the estimate
\begin{equation*}
\big\| (\,.\,)^{s} f \big\|_{L^{p}(0,\infty)} \leq C_{p,s} \big\| (\,.\,)^{s+1} f' \big\|_{L^{p}(0,\infty)}
\end{equation*}
holds for all $f\in C^{1}_{\mathrm{c}}([0,\infty))$.
\end{enumerate}
\end{lemma}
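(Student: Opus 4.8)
The plan is to prove the two Hardy-type estimates directly, starting from the fundamental theorem of calculus and splitting according to whether the weight exponent $s$ makes the $L^p$-norm of $(\,.\,)^s$ near the origin convergent or not; since we assume $s > -\frac1p$, the function $\rho \mapsto \rho^{sp}$ is always integrable near $0$, which is exactly what is needed.

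First I would handle part (2), the homogeneous case, which is cleaner because there is no boundary term. For $f \in C^1_{\mathrm{c}}([0,\infty))$ I would write $f(\rho) = -\int_\rho^\infty f'(\sigma)\dd\sigma$, so that $|f(\rho)| \le \int_\rho^\infty |f'(\sigma)|\dd\sigma$. Multiplying by $\rho^s$ and taking the $L^p(0,\infty)$-norm, the estimate reduces to the classical weighted Hardy inequality for the averaging-type operator $g \mapsto \big(\rho \mapsto \int_\rho^\infty g(\sigma)\dd\sigma\big)$ applied to $g(\sigma) = |f'(\sigma)|$, after absorbing the weight: one checks that $\big\| (\,.\,)^s \int_{(\,.\,)}^\infty g \big\|_{L^p(0,\infty)} \lesssim \big\| (\,.\,)^{s+1} g \big\|_{L^p(0,\infty)}$ precisely when $s + \frac1p > 0$. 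This is Hardy's inequality in its standard form (e.g. via Minkowski's integral inequality: substitute $\sigma = \rho t$ with $t \ge 1$, so $\int_\rho^\infty |f'(\sigma)|\dd\sigma = \rho \int_1^\infty |f'(\rho t)|\dd t$, then $\big\| (\,.\,)^{s+1} \int_1^\infty |f'((\,.\,)t)|\dd t \big\|_{L^p} \le \int_1^\infty t^{-s-1-\frac1p}\dd t \cdot \big\| (\,.\,)^{s+1} f' \big\|_{L^p}$, and the $t$-integral converges iff $s+1+\frac1p > 1$, i.e. $s > -\frac1p$). The constant $C_{p,s} = \big(s+\frac1p\big)^{-1}$ works.

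For part (1), the finite-interval case, I would use instead $f(\rho) = f(r) - \int_\rho^r f'(\sigma)\dd\sigma$, giving $|f(\rho)| \le |f(r)| + \int_\rho^r |f'(\sigma)|\dd\sigma$. Taking the weighted $L^p(0,r)$-norm and using the triangle inequality splits the bound into $\big\| (\,.\,)^s f(r) \big\|_{L^p(0,r)} = |f(r)| \cdot \big\| (\,.\,)^s \big\|_{L^p(0,r)}$, which is finite exactly because $s > -\frac1p$ (contributing a factor $\big(\frac{r^{sp+1}}{sp+1}\big)^{1/p}$), plus the term $\big\| (\,.\,)^s \int_{(\,.\,)}^r |f'| \big\|_{L^p(0,r)}$, which is handled by the same truncated Hardy inequality as in part (2): extend $|f'|$ by zero outside $[0,r]$, apply the homogeneous estimate, and note $\big\| (\,.\,)^{s+1} f' \big\|_{L^p(0,\infty)} = \big\| (\,.\,)^{s+1} f' \big\|_{L^p(0,r)}$. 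Collecting the two contributions yields the claim with $C_{p,r,s}$ depending only on $p$, $r$, $s$.

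The only mild subtlety — and the step I would be most careful about — is justifying the Hardy inequality at the exact borderline of the hypothesis, i.e. making sure the argument genuinely only needs $s > -\frac1p$ (strict) and not something stronger, and that all the integrals and interchanges of integration (Fubini/Minkowski) are legitimate for merely $C^1$ functions with the stated support/boundedness properties. This is routine: for $f \in C^1_{\mathrm c}$ or $C^1([0,r])$ the integrand $|f'(\sigma)|$ is bounded with the relevant support, so Tonelli applies without difficulty, and the substitution $\sigma = \rho t$ is a smooth change of variables. No deeper obstacle is expected; the lemma is a packaging of classical Hardy inequalities in the form needed to later trade the singular weights $(\,.\,)^{(d-1)/p+j}$ against derivatives when comparing $\|\varphi\|_{W^{k,p}}$ with the weighted norms of the radial profile.
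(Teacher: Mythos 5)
Your proof is correct, but it takes a genuinely different route from the paper. The paper proves both parts at once by a pointwise argument: it writes $x^{ps}|f(x)|^p$ as the exact derivative $\pd_x\big(\tfrac{p}{ps+1}x^{ps+1}|f(x)|^p\big)$ minus a remainder, absorbs the remainder via Young's inequality into the left-hand side and the derivative term, and then integrates the resulting differential inequality over $(0,r)$ or $(0,\infty)$; the boundary term $|f(r)|^p$ falls out automatically from the integration of the exact derivative. You instead start from the fundamental theorem of calculus, $f(\rho)=f(r)-\int_\rho^r f'$ resp.\ $f(\rho)=-\int_\rho^\infty f'$, and reduce the estimate to the classical Hardy inequality for the averaging operator $g\mapsto\int_{(\,.\,)}^\infty g$, proved by the substitution $\sigma=\rho t$ and Minkowski's integral inequality, with the $t$-integral converging precisely when $s>-\tfrac1p$; the boundary contribution in part (1) is handled separately via $\|(\,.\,)^s\|_{L^p(0,r)}<\infty$. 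Both arguments yield the same constant $\tfrac{p}{ps+1}$ in front of the derivative term, and your reduction of part (1) to the operator form of part (2) by extending $|f'|$ by zero is legitimate since the Minkowski argument needs nothing beyond nonnegativity and measurability of the integrand. One small advantage of your route is that it sidesteps the differentiability of $|f|^p$ at zeros of $f$ for $p$ close to $1$, which the paper has to handle by restricting to the set where $f\neq 0$; the paper's route, in exchange, is entirely self-contained and dispatches the boundary term without a separate triangle-inequality step.
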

\begin{proof}
We have by the product rule
\begin{equation*}
x^{ps} |f(x)|^{p} = \pd_{x} \Big( \frac{x^{ps+1}}{ps+1} |f(x)|^{p} \Big) - \frac{x^{ps+1}}{ps+1} \pd_{x} |f(x)|^{p} 
\end{equation*}
for each $x\in[0,r]$ with $f(x)\neq 0$. We get for the latter term
\begin{align*}
- \frac{x^{ps+1}}{ps+1} \pd_{x} |f(x)|^{p} &= - \frac{x^{ps+1}}{ps+1} \Re\Big( -p |f(x)|^{p-2} f(x) \overline{f'(x)} \Big) \\&=
\Re\Big( - x^{(p-1)s} |f(x)|^{p-2} f(x) \overline{\frac{p}{ps+1} x^{s+1} f'(x)} \Big) \\&\leq
\Big( 1-\frac{1}{p} \Big) x^{ps}|f(x)|^{p} +
\frac{1}{p} \Big( \frac{p}{ps+1} \Big)^{p} x^{p(s+1)} |f'(x)|^{p}
\end{align*}
for each $x\in[0,r]$ with $f(x)\neq 0$. Rearranging this implies the differential inequality
\begin{equation}
\label{DifferentialInequality}
x^{ps} |f(x)|^{p} \leq \pd_{x} \Big( \frac{p}{ps+1} x^{ps+1} |f(x)|^{p} \Big) + \Big( \frac{p}{ps+1} \Big)^{p} x^{p(s+1)} |f'(x)|^{p} \,.
\end{equation}
Integration of inequality \eqref{DifferentialInequality} over the respective interval yields since $s>-\frac{1}{p}$
\begin{equation*}
\int_{0}^{r} x^{ps} |f(x)|^{p} \dd x \leq \frac{p}{ps+1} r^{ps+1} |f(r)|^{p} + \Big( \frac{p}{ps+1} \Big)^{p} \int_{0}^{r} x^{p(s+1)} |f'(x)|^{p} \dd x
\end{equation*}
which gives in both cases the desired inequality of norms.
\end{proof}
The boundary term in \Cref{Hardy} is controlled by the following inequality.
\begin{lemma}
\label{HardyBoundary}
Let $p,r,s\in\RR$ with $p\geq 1$, $r>0$ and $s > -\frac{1}{p}$. There is a constant $C_{p,r,s} > 0$ such that the estimate
\begin{equation*}
|f(r)| \leq C_{p,r,s} \Big( \big\| (\,.\,)^{s} f \big\|_{L^{p}(0,r)} + \big\| (\,.\,)^{s+1} f' \big\|_{L^{p}(0,r)} \Big)
\end{equation*}
holds for all $f\in C^1([0,r])$.
\end{lemma}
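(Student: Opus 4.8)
The plan is to integrate an exact derivative over the whole interval $(0,r)$, using the hypothesis $s > -\frac{1}{p}$ to annihilate the boundary contribution at the left endpoint, and then to estimate the two resulting bulk terms by H\"older's inequality --- the very same strategy as in the proof of \Cref{Hardy}. For $f\in C^{1}([0,r])$ the product rule gives, at every $x\in(0,r)$ with $f(x)\neq 0$,
\begin{equation*}
\pd_{x}\Big( \frac{x^{ps+1}}{ps+1}\,|f(x)|^{p} \Big) = x^{ps}\,|f(x)|^{p} + \frac{x^{ps+1}}{ps+1}\,\pd_{x}|f(x)|^{p} \,,
\end{equation*}
and, exactly as in \Cref{Hardy}, this identity integrates correctly over $(0,r)$ (one works on the open set $\{f\neq 0\}$ and uses the absolute continuity of $x\mapsto|f(x)|^{p}$). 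Because $ps+1>0$, the primitive $\frac{x^{ps+1}}{ps+1}|f(x)|^{p}$ vanishes as $x\to 0^{+}$, so integration yields
\begin{equation*}
\frac{r^{ps+1}}{ps+1}\,|f(r)|^{p} = \int_{0}^{r} x^{ps}\,|f(x)|^{p}\dd x + \frac{1}{ps+1}\int_{0}^{r} x^{ps+1}\,\pd_{x}|f(x)|^{p}\dd x \,.
\end{equation*}

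The next step is to estimate the second integral on the right. From $\big|\pd_{x}|f(x)|^{p}\big| \leq p\,|f(x)|^{p-1}|f'(x)|$ and the factorization $x^{ps+1} = (x^{s})^{p-1}\,x^{s+1}$, H\"older's inequality with exponents $\frac{p}{p-1}$ and $p$ gives
\begin{equation*}
\Big| \int_{0}^{r} x^{ps+1}\,\pd_{x}|f(x)|^{p}\dd x \Big| \leq p\,\big\| (\,.\,)^{s} f \big\|_{L^{p}(0,r)}^{p-1}\,\big\| (\,.\,)^{s+1} f' \big\|_{L^{p}(0,r)}
\end{equation*}
(the case $p=1$ being immediate). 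Abbreviating $A = \big\| (\,.\,)^{s} f \big\|_{L^{p}(0,r)}$ and $B = \big\| (\,.\,)^{s+1} f' \big\|_{L^{p}(0,r)}$, the identity above then becomes $|f(r)|^{p} \leq \frac{ps+1}{r^{ps+1}}\,A^{p} + \frac{p}{r^{ps+1}}\,A^{p-1}B$. Since $p\geq 1$ we have $A^{p}\leq (A+B)^{p}$ and $A^{p-1}B\leq (A+B)^{p}$, whence $|f(r)|^{p}\leq C_{p,r,s}^{p}\,(A+B)^{p}$ for a constant $C_{p,r,s}>0$ depending only on $p,r,s$; taking $p$-th roots gives the claim.

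I do not expect a genuine obstacle here; the only point that needs a word is the justification of integrating the product-rule identity across the zero set of $f$ and up to $x=0$, which is carried out verbatim as in \Cref{Hardy} and introduces nothing new. If one prefers to sidestep the left endpoint altogether, there is an even more elementary route: from $f(r) = f(x) + \int_{x}^{r} f'(t)\dd t$, averaging over $x\in\big(\frac{r}{2},r\big)$ gives $|f(r)| \leq \frac{2}{r}\int_{r/2}^{r}|f(x)|\dd x + \int_{r/2}^{r}|f'(t)|\dd t$, and both terms on the right are controlled by $A$ and $B$ respectively through H\"older's inequality, since $x^{s}$ and $x^{s+1}$ are bounded above and below by positive constants on $\big[\frac{r}{2},r\big]$; this reproves the lemma with an $r$-dependent constant, as required.
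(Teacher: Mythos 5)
Your argument is correct, but it is not the route the paper takes. The paper's proof applies the fundamental theorem of calculus to the primitive $x^{s+1}f(x)$, writes $|f(r)|^{p}=\big|r^{-(s+1)}\int_{0}^{r}\pd_{x}(x^{s+1}f(x))\dd x\big|^{p}$, and then uses Jensen's inequality followed by $(a+b)^{p}\leq 2^{p-1}(a^{p}+b^{p})$. Because that primitive is \emph{linear} in $f$, it is genuinely $C^{1}$ on $(0,r]$ and continuous at $0$, so the integration step is completely unproblematic; the price is the Jensen step and a slightly cruder constant. You instead integrate the exact derivative of $x^{ps+1}|f(x)|^{p}/(ps+1)$ --- i.e.\ you rerun the identity behind \Cref{Hardy} but keep the boundary term on the left --- and close with H\"older. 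That works and yields explicit constants, but it inherits the need to justify differentiating $|f|^{p}$ across the zero set of $f$ (routine, as you note, and no worse than what \Cref{Hardy} already requires, but avoidable). Your fallback averaging argument, $f(r)=f(x)+\int_{x}^{r}f'$ averaged over $x\in(\tfrac{r}{2},r)$, is the most elementary of the three and cleanly exploits that the weights are bounded above and below on $[\tfrac{r}{2},r]$ and that only an $r$-dependent constant is claimed; it is a perfectly acceptable, arguably simpler, substitute for the paper's proof.
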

\begin{proof}
The fundamental theorem of calculus and Jensen's integral inequality imply
\begin{align*}
|f(r)|^{p} &= \Big| \frac{1}{r^{s+1}} \int_{0}^{r} \pd_{x} \Big( x^{s+1} f(x) \Big) \dd x \Big|^{p} \\&\leq
\frac{1}{r^{ps+1}} \int_{0}^{r} \Big| (s+1) x^{s} f(x) + x^{s+1} f'(x) \Big|^{p} \dd x \\&\leq
\frac{2^{p-1}(s+1)^{p}}{r^{ps+1}} \int_{0}^{r} x^{ps} |f(x)|^{p} \dd x + \frac{2^{p-1}}{r^{ps+1}} \int_{0}^{r} x^{p(s+1)} |f'(x)|^{p} \dd x \,,
\end{align*}
which yields the desired inequality.
\end{proof}
\section{Proof of the main results}
We combine the previous lemmas to prove \Cref{THMnorm} first.
\begin{proof}[Proof of \Cref{THMnorm}.] ``$\lesssim$'': For any $\varphi\in C^{\infty}_{\mathrm{rad}}(\overline{\BB^{d}_{r}})$, let us denote by $f_{\varphi}\in C^{\infty}_{\mathrm{ev}}([0,r])$ the radial representative with $\varphi(x) = f_{\varphi}(|x|)$ according to \Cref{RepresentativeRadial}. We note with a transformation of the integral that
\begin{equation*}
\| \varphi \|_{L^{p}(\BB^{d}_{r})} \simeq \Big\| (\,.\,)^{\frac{d-1}{p}} f_{\varphi} \Big\|_{L^{p}(0,r)}
\end{equation*}
for all $\varphi\in C^{\infty}_{\mathrm{rad}}(\overline{\BB^{d}_{r}})$. Now, fix $k\in\NN_{0}$. Let $1\leq n \leq k$ and $\alpha\in\mathbb{N}_{0}^{d}$ be a multi-index of length $|\alpha| = n$. The first part of \Cref{DerRad} implies with repeated applications of \Cref{Hardy,HardyBoundary}
\begin{align*}
\| \pd^{\alpha} \varphi \|_{L^{p}(\BB^{d}_{r})} &\lesssim \sum_{j=0}^{\left\lfloor\frac{n}{2}\right\rfloor} \Big\| (\,.\,)^{\frac{d-1}{p} + n - 2j} D^{n-j} f_{\varphi} \Big\|_{L^{p}(0,r)} \\&\lesssim
\sum_{j=1}^{\left\lfloor\frac{n}{2}\right\rfloor} |(D^{n-j} f_{\varphi})(r)| + \Big\| (\,.\,)^{\frac{d-1}{p} + n} D^{n} f_{\varphi} \Big\|_{L^{p}(0,r)} \\&\lesssim
\sum_{j=\left\lceil\frac{n}{2}\right\rceil}^{n} \Big\| (\,.\,)^{\frac{d-1}{p} + j} D^{j} f_{\varphi} \Big\|_{L^{p}(0,r)}
\end{align*}
for all $\varphi\in C^{\infty}_{\mathrm{rad}}(\overline{\BB^{d}_{r}})$. Summation over all multi-indices $0\leq |\alpha| \leq k$ yields the bound
\begin{equation}
\label{OneSide}
\| \varphi \|_{W^{k,p}(\BB^{d}_{r})} \lesssim \sum_{j=0}^k \Big\| (\,.\,)^{\frac{d-1}{p}+j} D^{j} f_{\varphi} \Big\|_{L^{p}(0,r)}
\end{equation}
for all $\varphi\in C^{\infty}_{\mathrm{rad}}(\overline{\BB^{d}_{r}})$. On $\RR^{d}$, one uses the second part of \Cref{Hardy} instead.
\par\medskip
``$\gtrsim$'': Conversely, the second part of \Cref{DerRad} implies the pointwise estimate
\begin{equation*}
\Big| |x|^{n} (D^{n} f_{\varphi})(|x|) \Big| \leq
\sum_{|\alpha|=n} \Big| q_{\alpha}(\tfrac{x}{|x|}) \pd_{x}^{\alpha} \varphi(x) \Big| \lesssim
\Big( \sum_{|\alpha|=n} |\pd^{\alpha}_{x} \varphi(x)|^{p} \Big)^{\frac{1}{p}}
\end{equation*}
for all $\varphi\in C^{\infty}_{\mathrm{rad}}(\overline{\BB^{d}_{r}})$ and all $x\in\overline{\BB^{d}_{r}}$. This yields the other inequality
\begin{equation}
\label{OtherSide}
\sum_{j=0}^k \Big\| (\,.\,)^{\frac{d-1}{p}+j} D^{j} f_{\varphi} \Big\|_{L^{p}(0,r)} \lesssim \| \varphi \|_{W^{k,p}(\BB^{d}_{r})}
\end{equation}
for all $\varphi\in C^{\infty}_{\mathrm{rad}}(\overline{\BB^{d}_{r}})$. Finally, for any radial representative $f_{\varphi}\in C^{\infty}_{\mathrm{ev}}([0,r])$, let us denote by $\widetilde{f}_{\varphi} \in C^{\infty}([0,r^{2}])$ the smooth function such that $f_{\varphi}(\rho) = \widetilde{f}_{\varphi}(\rho^{2})$ according to \Cref{RepresentativeEven}. Then $(D^{j} f_{\varphi})(\rho) = 2^{j} \widetilde{f}_{\varphi}^{(j)}(\rho^{2})$ and we infer from \eqref{OneSide}, \eqref{OtherSide} the equivalence of norms
\begin{equation*}
\| \varphi \|_{W^{k,p}(\BB^{d}_{r})} \simeq \sum_{j=0}^k \Big\| (\,.\,)^{\frac{d-1}{p}+j} D^{j} f_{\varphi} \Big\|_{L^{p}(0,r)} \simeq \sum_{j=0}^k \Big\| (\,.\,)^{\frac{d-2}{2p}+\frac{j}{2}} \widetilde{f}_{\varphi}^{(j)} \Big\|_{L^{p}(0,r^{2})}
\end{equation*}
for all $\varphi\in C^{\infty}_{\mathrm{rad}}(\overline{\BB^{d}_{r}})$. The result for homogeneous Sobolev norms follows analogously.
\end{proof}
Next, we establish a complete characterization of the radial Sobolev spaces, using our equivalent radial Sobolev norms.
\begin{proof}[Proof of \Cref{THMspace}.]
Let $W^{k,p}\big((0,r^{2}),w\big)$ be the weighted Sobolev space as defined via \eqref{WeightedSobolevNorm}. The densely defined linear operators
\begin{equation*}
\tr: C^{\infty}_{\mathrm{rad}}(\overline{\BB^{d}_{r}}) \subset W^{k,p}_{\mathrm{rad}}(\BB^{d}_{r}) \rightarrow W^{k,p}\big( (0,r^{2}),w \big) \,, \qquad \varphi \mapsto \varphi(\sqrt{\,.\,}e_{d}) \,,
\end{equation*}
and
\begin{equation*}
\ext: C^{\infty}([0,r^{2}]) \subset
W^{k,p}\big( (0,r^{2}),w \big) \rightarrow W^{k,p}_{\mathrm{rad}}(\BB^{d}_{r}) \,, \qquad f \mapsto f(|\,.\,|^{2}) \,.
\end{equation*}
are well-defined and, according to \Cref{RepresentativeEven,RepresentativeRadial}, satisfy $\tr(\varphi) \in C^{\infty}([0,r^{2}])$ with $\ext(\tr(\varphi)) = \varphi$ for all $\varphi \in C^{\infty}_{\mathrm{rad}}(\overline{\BB^{d}_{r}})$ and $\ext(f) \in C^{\infty}_{\mathrm{rad}}(\overline{\BB^{d}_{r}})$ with $\tr(\ext(f)) = f$ for all $f\in C^{\infty}([0,r^{2}])$. It follows from \Cref{THMnorm} that the operators $\tr$ and $\ext$ are bounded and thus have unique bounded linear extensions to the closure of their respective domains which are inverses of each other. In the same way, an isomorphism is also established for homogeneous Sobolev norms on $\RR^{d}$.
\end{proof}
Together with \Cref{THMnorm} and the previous lemmas, we are able to determine Sobolev norms of corotational maps.
\begin{proof}[Proof of \Cref{THMcorot}.]
``$\lesssim$'': For any corotational map $F\in C^{\infty}(\overline{\BB^{d}_{r}},\CC^{d})$ let us denote by $f\in C^{\infty}_{\mathrm{ev}}([0,r])$ the radial profile such that $F_{i}(x) = x_{i} f(|x|)$ for $i=1,\ldots,d$. By induction, we get for all multi-indices $\alpha\in\mathbb{N}_{0}^{d}$ the identity
\begin{equation}
\label{pdalphacorot}
\pd^{\alpha}_{x} F_{i}(x) = x_{i} \pd^{\alpha}_{x} f(|x|) + \alpha_{i} \pd^{\alpha-e_{i}}_{x} f(|x|) \,.
\end{equation}
For any integer $0 \leq n \leq k$, this implies the estimate
\begin{equation*}
\sum_{i=1}^{d} \sum_{|\alpha|=n} \big| \pd^{\alpha}_{x} F_{i}(x) \big|^{2} \lesssim \sum_{|\alpha|=n} |x|^{2} \big| \pd^{\alpha}_{x} f(|x|) \big|^{2} + \sum_{|\beta|=n-1} \big| \pd^{\beta}_{x} f(|x|) \big|^{2}
\end{equation*}
for all corotational maps $F\in C^{\infty}(\overline{\BB^{d}_{r}},\CC^{d})$ and all $x\in\overline{\BB^{d}_{r}}$. Integration of this inequality over $\BB^{d}_{r}$ gives with the first part of \Cref{DerRad,Hardy,HardyBoundary} the estimate
\begin{align*}
\sum_{i=1}^{d} \sum_{|\alpha|=n} \big\| \pd^{\alpha} F_{i} \big\|_{L^{2}(\BB^{d}_{r})} &\lesssim
\sum_{|\alpha|=n} \big\| |\,.\,| \pd^{\alpha} f(|\,.\,|) \big\|_{L^{2}(\BB^{d}_{r})} + \sum_{|\beta|=n-1} \big\| \pd^{\beta} f(|\,.\,|) \big\|_{L^{2}(\BB^{d}_{r})} \\&\lesssim
\sum_{j=\left\lceil\frac{n-1}{2}\right\rceil}^{n} \Big\| (\,.\,)^{\frac{d+1}{2}+2j-n} D^{j} f \Big\|_{L^{2}(0,r)} \\&\lesssim
\sum_{j=\left\lceil\frac{n-1}{2}\right\rceil}^{n-1} \big| D^{j}f(r) \big| + \Big\| (\,.\,)^{\frac{d+1}{2}+n} D^{n} f \Big\|_{L^{2}(0,r)} \\&\lesssim
\sum_{j=\left\lceil\frac{n-1}{2}\right\rceil}^{n} \Big\| (\,.\,)^{\frac{d+1}{2}+j} D^{j} f \Big\|_{L^{2}(0,r)}
\end{align*}
for all corotational maps $F\in C^{\infty}(\overline{\BB^{d}_{r}},\CC^{d})$. Now, \Cref{THMnorm} yields
\begin{equation*}
\| F \|_{H^{k}(\BB^{d}_{r})} \lesssim
\sum_{j=0}^{k} \Big\| (\,.\,)^{\frac{d+1}{2}+j} D^{j} f \Big\|_{L^{2}(0,r)} \simeq \| f(|\,.\,|) \|_{H^{k}(\BB^{d+2}_{r})}
\end{equation*}
for all corotational maps $F\in C^{\infty}(\overline{\BB^{d}_{r}},\CC^{d})$.
\par\medskip
``$\gtrsim$'': For the converse inequality, we start off with \Cref{THMnorm} and use the second part of \Cref{DerRad} to get the estimate
\begin{align*}
\| f(|\,.\,|) \|_{H^{k}(\BB^{d+2}_{r})} &\simeq
\sum_{j=0}^{k} \Big\| (\,.\,)^{\frac{d+1}{2}+j} D^{j}f \Big\|_{L^{2}(0,r)} \\&\simeq
\sum_{j=0}^{k} \Big\| |\,.\,|^{j+1} (D^{j}f)(|\,.\,|) \Big\|_{L^{2}(\BB^{d}_{r})} \\&\lesssim
\sum_{j=0}^{k} \sum_{|\alpha|=j} \Big\| |\,.\,| \pd^{\alpha} f(|\,.\,|) \Big\|_{L^{2}(\BB^{d}_{r})}
\end{align*}
for all $f\in C^{\infty}_{\mathrm{ev}}([0,r])$. The square of the absolute value of \Cref{pdalphacorot} yields the pointwise identity
\begin{equation*}
\big| \pd^{\alpha}_{x} F_{i}(x) \big|^{2} = (x_{i})^{2} \big| \pd^{\alpha}_{x} f(|x|) \big|^{2} + \pd_{x_{i}} \big( \alpha_{i} x_{i} \big| \pd^{\alpha-e_{i}}_{x} f(|x|)\big|^{2} \big) + \alpha_{i}(\alpha_{i}-1) \big| \pd^{\alpha-e_{i}}_{x} f(|x|) \big|^{2} \,.
\end{equation*}
Summing this over all $i=1,\ldots,d$ and rearranging gives
\begin{align*}
|x|^{2} \big| \pd^{\alpha}_{x} f(|x|) \big|^{2} = \sum_{i=1}^{d} \big| \pd^{\alpha}_{x} F_{i}(x) \big|^{2}
&- \sum_{i=1}^{d} \pd_{x_{i}} \big( \alpha_{i} x_{i} \big| \pd^{\alpha-e_{i}}_{x} f(|x|)\big|^{2} \big) \\&-
\sum_{i=1}^{d} \alpha_{i}(\alpha_{i}-1) \big| \pd^{\alpha-e_{i}}_{x} f(|x|) \big|^{2} \,,
\end{align*}
so that integration of this equation over $\BB^{d}_{r}$ implies with the divergence theorem
\begin{equation*}
\Big\| |\,.\,| \pd^{\alpha} f(|\,.\,|) \Big\|_{L^{2}(\BB^{d}_{r})} \leq \sum_{i=1}^{d} \big\| \pd^{\alpha} F_{i} \big\|_{L^{2}(\BB^{d}_{r})} \,.
\end{equation*}
Now, the previous two estimates show
\begin{align*}
\| f(|\,.\,|) \|_{H^{k}(\BB^{d+2}_{r})} \lesssim \sum_{|\alpha|=n} \Big\| |\,.\,| \pd^{\alpha} f(|\,.\,|) \Big\|_{L^{2}(\BB^{d}_{r})} \lesssim
\sum_{i=1}^{d} \sum_{|\alpha|=n} \big\| \pd^{\alpha} F_{i} \big\|_{L^{2}(\BB^{d}_{r})} \simeq \| F \|_{H^{k}(\BB^{d}_{r})}
\end{align*}
for all corotational maps $F\in C^{\infty}(\overline{\BB^{d}_{r}},\CC^{d})$.
\end{proof}
\section*{Acknowledgments}
This work has been supported by the Vienna School of Mathematics (VSM). The author thanks Roland Donninger and Irfan Glogi\'{c} for helpful discussions and useful comments on a first draft of this paper.

\end{document}